\newtheorem{thm}{Theorem}
\newtheorem{lem}{Lemma}
\newtheorem{prop}{Proposition}
\theoremstyle{remark}
\newtheorem {Rem}{Remark}}
\theoremstyle{definition}
\newtheorem{example}{Example}}
                         \newtheorem{defn}[thm]{Definition}
\def\div{\mathrm{div}}
\def\fchar{\mathrm{char}}
                                                   \newcommand{\CC}{\mathcal C}
\newcommand{\vf}{\varphi}
\newcommand{\eps}{\varepsilon}
\newcommand{\beq}{\begin{equation}}
\newcommand{\eeq}{\end{equation}}
\begin{document}
\begin{abstract}
Let $d>1$ be an integer, $K_0$  a perfect field such that $\fchar(K_0) \nmid d$,  $n>d$ an integer that is prime to $d$.
Let  $f(x)\in K_0[x]$ be a degree $n$ monic polynomial without repeated roots,  and $\mathcal{C}_{f,d}$ a smooth projective  model of  the affine curve $y^d=f(x)$. Let $J(\mathcal{C}_{f,d})$ be the Jacobian of the $K_0$-curve $\mathcal{C}_{f,d} $. As usual, we identify $\mathcal{C}_{f,d}$ with its canonical image in $J(\mathcal{C}_{f,d})$ (such that the only  ``infinite point'' of $\mathcal{C}_{f,d}$ goes to the zero of the group law on $J(\mathcal{C}_{f,d})$).

We say that an integer $m>1$ is $(n,d)$-reachable over $K_0$ if there  exists a polynomial $f(x)$ as above such that $\mathcal{C}_{f,d}(K_0)$ contains a torsion point of order $m$.

Let us put $\ell_0:=[(n+d)/d], \ m_0:=\ell_0 d$.
Earlier we proved  that if $m$ is  $(n,d)$-reachable, then either $m=d$ or $m = n$ or $m \ge m_0$
(in addition, both $d$ and $n$ are  $(n,d)$-reachable over every $K_0$).
We also proved that if  $m_0$ is  $(n,d)$-reachable over some $K_0$ then $n-m_0+\ell_0\ge 0$.

In the present paper we discuss the $(n,d)$-reachability of $m_0$ when
$n-m_0+\ell_0=0$ or $1$.
\end{abstract}
\keywords{Curves,  Jacobians, Torsion Points}
\subjclass[2020]{14H40, 14G27, 11G10}

\title[Torsion Points of small order]{Torsion points of small order on cyclic covers of $\mathbb P^1$. III}
\author {Boris M. Bekker}
 \address{ St.Petersburg State University, 7/9 Universitetskaya nab., St. Petersburg, 199034 Russia.}
\email{ bekker.boris@gmail.com}
%\thanks{The first named author (B.B.) is partially supported by RFFI grant N 14-01-00393}
\author {Yuri G. Zarhin}
\address{Pennsylvania State University, Department of Mathematics, University Park, PA 16802, USA}
\email{zarhin@math.psu.edu}
\thanks{The second named author (Y.Z.) was partially supported by Simons Foundation Collaboration grant   \# 585711  and the Travel Support for Mathematicians Grant MPS-TSM-00007756 from the Simons Foundation. Most of this work was done in August-September 2025 during his stay at the Max-Planck-Institut f\"ur Mathematik (Bonn, Germany), whose hospitality and support are gratefully acknowledged.}

\maketitle
\section{Introduction}

In what follows, $K$ is an algebraically closed field, $n$ and $d$ are positive integers such that
$$2 \le d<n, \quad (n,d)=1,  \quad \fchar(K) \nmid d.$$
Let $f(x) \in K[x]$ be a degree $n$ polynomial without repeated roots, $\mathcal{C}_{f,d}$  the smooth
projective model of the smooth plane affine curve $C_{f,d}:y^d=f(x)$,  and $\mathcal O=\mathcal O_{f,d}$   the only point of
$\mathcal{C}_{f,d}$ that lies above the {\sl infinite point} of the projective closure of $C_{f,d}$ (see \cite{Arul,BZML}). We may view $x$ and $y$ as rational functions on  $\mathcal{C}_{f,d}$ with only pole at $\mathcal O$;
they generate the field $K(\mathcal{C}_{f,d})$  of rational functions on  $\mathcal{C}_{f,d}$ over $K$.

Further we identify
$\mathcal{C}_{f,d}$ with its image in its jacobian $J(\mathcal{C}_{f,d})$ under the canonical embedding, which sends
$\mathcal O$ to the zero of the group law on $\mathcal{C}_{f,d}$.

We  write $\mu_d$ for the group  of $d$-th roots of unity in $K$, which acts on $\mathcal{C}_{f,d}$.
It was proven in \cite{BZR} that a $K$-point $\mathcal P$  on  $\mathcal{C}_{f,d}$
is a torsion point of order $d$ if and only if $y(\mathcal P)=0$, i.e., $x(\mathcal P)\in K$ is a root of $f(x)$. On the other hand, if $m\neq 1,d$, then
the number of $m$-order points $\mathcal P=(x(\mathcal P),y(\mathcal P)) \in \mathcal{C}_{f,d}(K)$ such that $y(\mathcal P)\neq0$ is divisible by $d$.

We  write $\mu_d$ for the group  of $d$-th roots of unity in $K$. If $m> 1$ is an integer, then by an $ m$-packet on $\mathcal C _{f ,d}$ we mean a nontrivial $\mu_d$ -orbit
consisting of $m$-torsion $K$-points of $\mathcal C_{f ,d}$. (Each such packet consists of $d$ distinct
points.)

Let $K_0$ be a perfect subfield of $K$ such that $f(x)\in K_0[x]$. Then one may view $\mathcal{C}_{f,d}$  as a $K_0$-curve;
in addition, $\mathcal O_{f,d}$ becomes a $K_0$-point of $\mathcal{C}_{f,d}$. One may also view $x$ and $y$ as rational functions that are defined over $K_0$.

 The following definition was introduced in \cite{BZML}.

\begin{defn}
An integer $m>1$ is $(n,d)$-reachable over $K_0$ if there exists a degree $n$ polynomial without repeated roots such that
$\mathcal{C}_{f,d}(K_0)$ contains a torsion point of order $m$.
\end{defn}

In \cite{BZR,BZML} we proved the following assertions.

\begin{itemize}
\item[(i)]
$d$ and $n$ are $(n,d)$-reachable over every $K_0$.
\item[(ii)]
If  $1<m<d$ or $d<m<n$, then $m$ is not  $(n,d)$-reachable over any $K_0$.
\item[(iii)]
If $n<m<2n$ and $m$ is $(n,d)$-reachable over some $K_0$, then either
$d |m$ or $m \equiv n \bmod d$.
\item[(iv)] Let us put
$m_0:=d \cdot [(n+d)/d]$.
If $m>n$ and $m$ is $(n,d)$-reachable over some $K_0$, then either
$m=m_0$ or $m>m_0$.
\item[(v)]
If  $m_0$ is $(n,d)$-reachable over some  $K_0$,  then
$n-m_0+\ell_0 \ge 0$, where
$$\ell_0:= [(n+d)/d]=m_0/d.$$
\item[(vi)]
If $K_0$ is infinite and $n-m_0+\ell_0 \ge 0$, then  $m_0$ is $(n,d)$-reachable over  $K_0$ if $\fchar(K)$
enjoys one of the following properties.
\begin{enumerate}
\item[(1)]
$\fchar(K)=0$.
\item[(2)]
$\fchar(K)>n$.
\item[(3)]
If $n-m_0+\ell_0 =0$, then $\fchar(K)$ does not divide $\ell_0$.
\item[(4)]
If $n-m_0+\ell_0 >0$, then $\fchar(K)$ does not divide $n-m_0+\ell_0$.
\end{enumerate}
\end{itemize}

In this paper we discuss the following two questions. In Section \ref{rational} we construct
versal families of curves of curves  $\mathcal{C}_{f,d}$ with torsion $K_0$-points of order
$m_0$. In addition, we discuss in detail the cases when  $n-m_0+\ell_0=0$  or $1$.
In Section \ref{ellipticCurves} we discuss elliptic curves with torsion points of order 4
(the case of $n=3,d=2, \ell_0=2$).
In Section \ref{2 packets} we deal with curves $\mathcal{C}_{f,d}$ over $K$
that have at least $2d$ points of order $m_0$, paying special attention to the case of hyperelliptic curves
(when $d=2$) in Section \ref{d=2}.

\section{Rationality Questions}
\label{rational}

\vskip.5cm

We may view $x$ and $y$ as rational functions on  $\mathcal{C}_{f,d}$ with only pole at $\mathcal O$;
they generate the field $K(\mathcal{C}_{f,d})$  of rational functions on  $\mathcal{C}_{f,d}$ over $K$.
One may view  $K(\mathcal{C}_{f,d})$ as the $d$-dimensional vector space over the field $K(x)$ of rational functions
with a basis $\{y^i \mid 0 \le i\le d-1\}$, i.e.,
\begin{equation}
\label{Kxy}
 K(\mathcal{C}_{f,d})=\oplus_{i=0}^{d-1} y^i K(x).
 \end{equation}
Let $K_0$ be a perfect subfield of $K$ such that
$$f(x)\in K_0[x].$$
Then one may view  $\mathcal{C}_{f,d}$ as the smooth projective curve over $K_0$ while
$x,y$ lie in the field $K_0(\mathcal{C}_{f,d})$ of rational functions on  $\mathcal{C}_{f,d}$ over $K_0$;
in addition, $\mathcal O$ becomes a $K_0$-point of  $\mathcal{C}_{f,d}$ and
\begin{equation}
\label{K0xy}
 K_0(\mathcal{C}_{f,d})=\oplus_{i=0}^{d-1} y^i K_0(x).
 \end{equation}

 Let  us define the  integer
$$\ell_0:=[(n+d)/d] \ge 2.$$
Then
 $$m_0=d \cdot [(n+d)/d]=d\ell_0$$
is the only integer that lies (strictly) between $n$ and $n+d$ and is divisible by $d$.

We know \cite[Th. 3 and Prop. 2 and 3]{BZML}
that if a curve $\CC_{f,d}$ has a point  $\mathcal P$ of order $m$, where $n<m<n+d$, with abscissa $x(\mathcal P)=a$, then
$$m=m_0,  \quad n-m_0+\ell_0 \ge 0,$$
and the polynomial $f(x)$ can be represented in the form
$$f(x)=A(x-a)^{m_0}+v(x)^d, \ \ \text{ where } \ \ A \in K^{*}, \ \ v(x)\in K[x], \ \ \deg(v)=\ell_0.$$
In addition,
\begin{itemize}
\item[(a)]
 the principal divisor $m(\mathcal P)-m(\mathcal O)$ coincides with $\div(v(x)-y)$;
 \item[(b)]
 if $B$ is the leading coefficient of $v(x)$, then
 $A=-B^{d}$;
 there is a nonzero degree $n-m_0+\ell_0$ polynomial $q(x)\in K[x]$ such that
 $v(x)=B(x-a)^{\ell_0}+q(x)$.
 \end{itemize}

 \begin{prop}
 \label{orderM0}
 Suppose that
 $$n-m_0+\ell_0 \ge 0.$$
 Let $K_0$ be a perfect subfield of $K$,  and  $f(x) \in K_0[x]$ a degree $n$ polynomial without repeated roots.
 \begin{enumerate}
 \item[(1)]
Then the following conditions are equivalent.
 \begin{itemize}
 \item[(i)]
 The curve $\CC_{f,d}$ has a  $K_0$-point  $\mathcal P$ of order $m_0$ with abscissa $x(\mathcal P)=a\in K_0$.
 \item[(ii)]
  There exist a nonzero $B\in K_0$ and a degree $\ell_0$ polynomial $v(x)\in K_0[x]$ with nonzero leading coefficient $B$
such that
$$v(a)\ne 0, \; f(x)=-B^d (x-a)^{m_0}+v(x)^d, \textrm\;{and}\; \mathcal P=(a,v(a)).$$
In addition, there is a nonzero degree $n-m_0+\ell_0$ polynomial $q(x)\in K_0[x]$
such that $q(a)\ne 0$ and
 $v(x)=B(x-a)^{\ell_0}+q(x)$.
 \end{itemize}
 \item[(2)]
 Suppose that $f(x)$ enjoys the equivalent properties {\rm({i})}  and {\rm({ii})}
 and consider the  polynomials
 $$h(x):=v(a)^{-d}f(x+a), \quad w(x):=v(a)^{-1} v(x+a),
 \quad r(x):=v(a)^{-1} q(x+a)$$
 with coefficients in $K_0$,
 Then $h(x), w(x), r(x)$ enjoy the following properties.
 \begin{itemize}
 \item[(a)]
 $h(x)$  is a degree $n$ polynomial without repeated roots and
 $$h(x)=-\tilde{B}^d x^{m_0}+w(x)^d,  \quad
 w(x)=\tilde{B}x^{\ell_0}+r(x)$$
 where
 $$\tilde{B}:=\frac{B}{v(a)} \in K_0.$$
  \item[(b)]
 $$\deg(w)=\ell_0, \quad \deg(r)=n-m_0+\ell_0<\ell_0=\deg(w), \quad w(0) =1.$$
 In addition, $r(x) \not\equiv 0$.
 \item[(c)]
 The map
 $$(x,y) \to (x_1,y_1)=\left(x-a, \frac{y}{v(a)}\right)$$
 defines the isomorphism of plane affine algebraic $K_0$-curves
 $y^d=f(x)$ and $y_1^d=h(x_1)$ that extends to
 the isomorphism of $K_0$-curves
 $\CC_{f,d} \cong \CC_{h,d}$, which sends $\mathcal O_{f,d}$ to $\mathcal O_{h,d}$ and $\mathcal  P$ to $\mathcal Q$.
 \item[(d)]
 $\mathcal Q=(0,1)$ is a $K_0$-point of order $m_0$ on $\CC_{h,d}$.
 \end{itemize}
 \end{enumerate}
 \end{prop}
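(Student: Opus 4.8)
The plan is to deduce part~(1) from the cited results of \cite{BZML} --- which already establish everything over $K$ --- together with a short descent along $K_0\subseteq K$, and to obtain part~(2) by a direct change of variables. The hard part will be the descent in the implication (i)~$\Rightarrow$~(ii): one has to see that the relevant one‑dimensional space of functions is defined over $K_0$, and then remove the remaining scalar ambiguity; everything else is either quoted or a routine verification.

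\emph{(i) $\Rightarrow$ (ii).} Assume $\mathcal P\in\CC_{f,d}(K_0)$ has order $m_0$ with $x(\mathcal P)=a\in K_0$. As $n<m_0<n+d$, \cite[Th.~3, Prop.~2 and~3]{BZML} applies and gives, over $K$, a nonzero $B\in K$ and $v(x)\in K[x]$ of degree $\ell_0$ with leading coefficient $B$ such that $f(x)=-B^{d}(x-a)^{m_0}+v(x)^d$, $v(x)=B(x-a)^{\ell_0}+q(x)$ with $q\ne0$ of degree $n-m_0+\ell_0$, and $\div\bigl(v(x)-y\bigr)=m_0(\mathcal P)-m_0(\mathcal O)$. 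The divisor $m_0(\mathcal O)-m_0(\mathcal P)$ is $K_0$-rational and of degree $0$, and $L\bigl(m_0(\mathcal O)-m_0(\mathcal P)\bigr)$ contains the nonzero function $v(x)-y$, hence is one‑dimensional over $K$; being attached to a $K_0$-rational divisor it is spanned over $K_0$ by some $\psi_0\in K_0(\CC_{f,d})$. Writing $v(x)-y=\lambda\psi_0$ with $\lambda\in K^{*}$ and comparing $y^{1}$-components in \eqref{K0xy}: that of $v(x)-y$ is the constant $-1$, so the $y^{1}$-component of $\psi_0$ is the constant $-1/\lambda$, which lies in $K_0$ (since $\psi_0\in K_0(\CC_{f,d})$); therefore $\lambda\in K_0$ and $v(x)-y\in K_0(\CC_{f,d})$, i.e.\ $v\in K_0[x]$. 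Then $B$, $A=-B^{d}$ and $q=v-B(x-a)^{\ell_0}$ lie in $K_0$, and $v(a)=y(\mathcal P)\in K_0$; moreover $v(a)\ne0$ because $\mathcal P$ has order $m_0\ne d$, hence $y(\mathcal P)\ne0$ by \cite{BZR}. Finally $q(a)=v(a)\ne0$ and $\mathcal P=(a,v(a))$, which is (ii).

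\emph{(ii) $\Rightarrow$ (i).} Now $\mathcal P:=(a,v(a))$ lies on $y^d=f(x)$ because $f(a)=v(a)^d$, and it is a $K_0$-point. Put $\psi:=v(x)-y$. Since the conjugates of $y$ over $K(x)$ are the $d$-th roots of $f(x)$, one has $N_{K(\CC_{f,d})/K(x)}(v(x)-y)=v(x)^d-f(x)=B^{d}(x-a)^{m_0}$. From $\mathrm{ord}_{\mathcal O}(x)=-d$, $\mathrm{ord}_{\mathcal O}(y)=-n$ we get $\mathrm{ord}_{\mathcal O}\bigl(v(x)\bigr)=-d\ell_0=-m_0<-n$, so $\psi$ has a unique pole, of order $m_0$, at $\mathcal O$; pushing $\div(\psi)$ forward along $x\colon\CC_{f,d}\to\mathbb P^{1}$ and using the norm identity, its zero divisor is supported on the fibre over $a$, which --- as $f(a)=v(a)^d\ne0$ --- consists of the distinct points $(a,\zeta v(a))$, $\zeta\in\mu_d$; and $\psi(a,\zeta v(a))=(1-\zeta)v(a)$ vanishes only for $\zeta=1$, i.e.\ only at $\mathcal P$. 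Hence $\div(\psi)=m_0(\mathcal P)-m_0(\mathcal O)$, so $m_0\mathcal P=0$ in $J(\CC_{f,d})$. If the order $m$ of $\mathcal P$ were a proper divisor of $m_0=d\ell_0$, then $m\le m_0/2\le d(\ell_0-1)<n$ (using $\ell_0\ge2$ and $m_0-d<n$); but a function with divisor $m(\mathcal P)-m(\mathcal O)$ lies in $L(m\mathcal O)$, and writing it as $\sum_{i=0}^{d-1}\phi_i(x)y^{i}$ with $\phi_i\in K[x]$ (its only pole is at $\mathcal O$), the inequality $m<n$ forces $\phi_i=0$ for $i\ge1$, so it is a polynomial in $x$ and its divisor is $\mu_d$-invariant --- impossible, since $\mathcal O$ is $\mu_d$-fixed while $\mathcal P$ is not ($y(\mathcal P)=v(a)\ne0$). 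Therefore $m=m_0$, proving (i).

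\emph{Part (2).} This is a direct computation. Substituting $x=x_1+a$, $y=v(a)y_1$ into $f(x)=-B^{d}(x-a)^{m_0}+v(x)^d$ and into $v(x)=B(x-a)^{\ell_0}+q(x)$, and dividing by the appropriate powers of $v(a)$, yields~(a) with $\tilde B=B/v(a)$; then~(b) is immediate, since $\deg w=\deg v=\ell_0$, $\deg r=\deg q=n-m_0+\ell_0<\ell_0$ (because $n<m_0$), $w(0)=v(a)^{-1}v(a)=1$, and $r\not\equiv0$ because $q\not\equiv0$. The map in~(c) is invertible over $K_0$ and carries $C_{f,d}$ to $C_{h,d}$, since $y_1^d=v(a)^{-d}f(x_1+a)=h(x_1)$; it therefore extends to an isomorphism of the smooth projective models taking the unique point at infinity $\mathcal O_{f,d}$ to $\mathcal O_{h,d}$ and $\mathcal P=(a,v(a))$ to $\mathcal Q=(0,1)$. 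Since this isomorphism matches the base points of the two canonical embeddings, it induces an isomorphism of Jacobians compatible with them, so $\mathcal Q$ has the same order $m_0$ as $\mathcal P$; this is~(d). (Alternatively, (d) follows by applying the implication (ii) $\Rightarrow$ (i) of part~(1) to $h$ with $a=0$.)
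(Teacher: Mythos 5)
Your proof is correct, and its core --- the descent from $K$ to $K_0$ in (i)~$\Rightarrow$~(ii) --- is essentially the paper's argument: both start from the cited results of \cite{BZML} over $K$, observe that the principal divisor $m_0(\mathcal P)-m_0(\mathcal O)$ is $K_0$-rational, produce a $K_0$-rational generator of the corresponding one-dimensional space, and then pin down the scalar $\lambda$ by looking at the $y^{1}$-component in the decomposition \eqref{K0xy} (you make the Riemann--Roch space explicit where the paper just invokes the existence of a $K_0$-rational function with the given divisor; the conclusion $\lambda\in K_0$, hence $v\in K_0[x]$, is identical). The one place you genuinely diverge is (ii)~$\Rightarrow$~(i): the paper simply asserts that $\mathcal P=(a,v(a))$ has order $m_0$ over $K$, leaning on the quoted results, whereas you reprove it from scratch --- the norm computation $N(v(x)-y)=B^d(x-a)^{m_0}$, the identification $\div(v(x)-y)=m_0(\mathcal P)-m_0(\mathcal O)$, and the exclusion of proper divisors of $m_0$ via the pole-order/$\mu_d$-invariance argument in $L(m(\mathcal O))$. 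That makes your write-up self-contained at the cost of length; the only micro-gap is that when you force $\phi_i=0$ for $i\ge1$ you should note that the pole orders $d\deg\phi_i+ni$ are pairwise distinct modulo $d$ (since $(n,d)=1$), so no cancellation of leading pole terms can occur --- a standard point, easily added. Part (2) is the same direct change of variables in both treatments.
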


%\begin{Rem}
%It is proven in \cite{BZML} that such a $f(x)$ exists if $K_0$ is infinite while either $\fchar(K_0)=0$ or $\fchar(K_0)>n$.
%\end{Rem}

 \begin{proof}[Proof of Proposition \ref{orderM0}]
 Let $\mathcal P=(a,c)$ be a $K_0$-point of order $m_0$ on $\CC_{f,d}$.  Since $m_0 \ne d$, we have
$c \ne 0.$
 We know that there exists a polynomial $v(x) \in K[x]$ with (nonzero) leading coefficient $B \in K$ such that
 $m_0(\mathcal P)-m_0(\mathcal O)=\div(v(x)-y)$,
 $$f(x)=-B^d (x-a)^{m_0}+v(x)^d,$$
 and
 $$q(x):=v(x)-B(x-a)^{\ell_0)}\in K[x]$$
  is a nonzero   polynomial of degree $n-m_0+\ell_0$.

 Clearly, the principal divisor
 $$\mathcal{D}:=m_0(\mathcal P)-m_0(\mathcal O)=m_0\left((\mathcal P)-(\mathcal  O)\right)$$
  is defined over $K_0$. This implies that there is a nonzero  rational function $h(x)\in K_0(\CC_{f,d})$
 and a nonzero constant $\lambda \in K$ such that
 $$h=\lambda (v(x) -y).$$
 Since $\mathcal P=(a,c)$ is the only zero of $h$, we get $c=v(a)$, hence, $v(a) \ne 0$.
 Since $y \in K_0(x)$,  we conclude that $\lambda \in K_0$, which implies
 (in light of \eqref{K0xy})
 that $v(x) \in K_0[x]$, and therefore its leading coefficient
 $B$ lies in $K_0$. This implies that $q(x)\in K_0[x]$.
 It remains to notice that if $q(a)=0$,
 then $a$ is also a  root of $v(x)$, which implies that $a$ is a multiple root of $f(x)$,
 which is not the case.

 Conversely, suppose that we are given
 $$a, B \in K_0; \quad  q(x), v(x), f(x) \in K_0[x]$$
 as above. Then $\mathcal P=(a,v(a))$ is a $K$-point of order $m_0$ on $\CC_{f,d}$.
 Since $a,v(a)\in K_0$, we conclude that $\mathcal P$  is a $K_0$-point of order $m_0$ on $\CC_{f,d}$.
 This ends the proof of (1).

 The assertion (2) immediately follows from (1) via direct computations.
 \end{proof}

\begin{thm}
\label{m0Norm}
 Suppose that
 $$n-m_0+\ell_0 \ge 0.$$
 Let $K_0$ be a perfect subfield of $K$,  and  $f(x) \in K_0[x]$ a degree $n$ polynomial without repeated roots.
 Then the curve $\CC_{f,d}$ has a  $K_0$-point  $\mathcal P$ of order $m_0$  if and only if there exists a degree $n$ polynomial $\tilde{f}(x)\in K_0[x]$ without repeated roots that enjoys the following properties.

 \begin{itemize}
 \item[(i)]
  There exist a nonzero $\tilde{B}\in K_0$ and a degree $\ell_0$ polynomial $w(x)\in K_0[x]$ with nonzero leading coefficient $\tilde{B}$
such that
$$w(0)=1, \quad \tilde{f}(x)=-\tilde{B}^d x^{m_0}+w(x)^d.$$
\item[(ii)]
There is a nonzero degree $n-m_0+\ell_0$ polynomial $\tilde{q}(x)\in K_0[x]$
such that $\tilde q(0)=1$ and
 $w(x)=\tilde B x^{\ell_0}+\tilde q(x)$.
 \item[(iii)]
 There is an isomorphism of algebraic $K_0$-curves $\CC_{f,d}$ and $\CC_{\tilde{f},d}$
 under which $\mathcal O_{f,d}$ goes to $\mathcal O_{\tilde{f},d}$ and $\mathcal P\in \CC_{f,d}(K_0)$ goes to $\mathcal Q=(0,1)\in \CC_{\tilde{f},d}(K_0)$.
 \end{itemize}
 If this is the case, then $\mathcal Q=(0,1)$ is a $K_0$-point of order $m_0$ on $\CC_{\tilde{f},d}$.

\end{thm}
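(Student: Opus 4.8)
The plan is to deduce Theorem~\ref{m0Norm} directly from Proposition~\ref{orderM0}, of which it is a coordinate-normalized reformulation; the only genuinely new ingredient is transport of structure along an isomorphism of pointed curves.

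\emph{Necessity.} Assume $\CC_{f,d}$ has a $K_0$-point $\mathcal P$ of order $m_0$. Since $m_0>1$ we have $\mathcal P\ne\mathcal O_{f,d}$, so $x$ (which has its only pole at $\mathcal O_{f,d}$ and is defined over $K_0$) takes a value $a:=x(\mathcal P)\in K_0$ at $\mathcal P$. Thus condition {\rm(i)} of Proposition~\ref{orderM0}(1) holds, and the equivalent condition {\rm(ii)} provides $B\in K_0^{*}$, a degree $\ell_0$ polynomial $v(x)\in K_0[x]$ with leading coefficient $B$ and $v(a)\ne0$, and a degree $n-m_0+\ell_0$ polynomial $q(x)\in K_0[x]$ with $q(a)\ne0$, such that $f(x)=-B^{d}(x-a)^{m_0}+v(x)^{d}$ and $v(x)=B(x-a)^{\ell_0}+q(x)$. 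Now apply Proposition~\ref{orderM0}(2): set $\tilde f:=h$, $\tilde B:=B/v(a)$, and, with $w,r$ as produced there, $\tilde q:=r$. Items {\rm(a)} and {\rm(b)} of Proposition~\ref{orderM0}(2) are precisely properties {\rm(i)} and {\rm(ii)} of the theorem, once one observes that setting $x=0$ in $w(x)=\tilde B x^{\ell_0}+r(x)$ and using $w(0)=1$ (note $\ell_0\ge2>0$) forces $\tilde q(0)=r(0)=1$; items {\rm(c)} and {\rm(d)} are property {\rm(iii)}, with $\mathcal Q=(0,1)$.

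\emph{Sufficiency.} Let $\tilde f,\tilde B,w,\tilde q$ be as in {\rm(i)}--{\rm(ii)}. First, $(0,1)$ lies on $\CC_{\tilde f,d}$, since $\tilde f(0)=w(0)^{d}=1$ (the $x^{m_0}$ term vanishes at $0$). Next, invoke the converse half of Proposition~\ref{orderM0}(1) with $a=0$: the data $0\in K_0$, $\tilde B\in K_0^{*}$, $v(x):=w(x)$ (degree $\ell_0$, leading coefficient $\tilde B$, $w(0)\ne0$), the identity $\tilde f(x)=-\tilde B^{d}x^{m_0}+w(x)^{d}$, and $q(x):=\tilde q(x)$ (degree $n-m_0+\ell_0$, $\tilde q(0)\ne0$, $w(x)=\tilde B x^{\ell_0}+\tilde q(x)$) satisfy all the hypotheses there, so $\mathcal Q=(0,w(0))=(0,1)$ is a $K_0$-point of order $m_0$ on $\CC_{\tilde f,d}$. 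Finally, the $K_0$-isomorphism $\CC_{f,d}\cong\CC_{\tilde f,d}$ of {\rm(iii)} carries $\mathcal O_{f,d}$ to $\mathcal O_{\tilde f,d}$; hence it is compatible with the two canonical embeddings (each normalized by $\mathcal O\mapsto 0$) and induces an isomorphism $J(\CC_{f,d})\cong J(\CC_{\tilde f,d})$ of abelian varieties, which preserves orders of torsion points. Since $\mathcal P\in\CC_{f,d}(K_0)$ is by definition the preimage of $\mathcal Q$ under this isomorphism, $\mathcal P$ has order $m_0$; this also yields the concluding sentence of the theorem.

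The proof is thus almost entirely bookkeeping built on Proposition~\ref{orderM0}. The one step that deserves a word of justification — and the place where an unwary argument could slip — is the last one: that an isomorphism of the curves matching the distinguished points $\mathcal O_{f,d}\mapsto\mathcal O_{\tilde f,d}$ respects the order of points, which follows from functoriality of the Jacobian together with the base-point characterization of the canonical embedding. No new construction or estimate is required beyond what is already contained in Proposition~\ref{orderM0}.
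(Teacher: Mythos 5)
Your proof is correct and takes the same route as the paper, which simply states that the theorem ``follows readily from Proposition \ref{orderM0}''; you have spelled out exactly that deduction (normalization via part (2) for necessity, the converse half of part (1) at $a=0$ plus transport along the pointed isomorphism for sufficiency). No discrepancies.
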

\begin{proof}
The assertion follows readily from Proposition \ref{orderM0}.

\end{proof}

Now let us discuss the case of small nonnegative $n-m_0+\ell_0$.

\begin{thm}
\label{equal0}
Suppose that $n-m_0+\ell_0=0$. Then the following statements hold.
\begin{itemize}
\item[(i)]
$m_0$ is $(n,d)$-reachable over $K_0$ if and only if $\fchar(K_0)$ does not divide $\ell_0$.
\item[(ii)]
Suppose that $\fchar(K_0)$ does not divide $\ell_0$. Let us consider the polynomial
$$f_0(x)=-x^{m_0}+(x^{\ell_0}+1)^d \in K_0[x].$$
Then $f_0(x)$ is a degree $n$ polynomial without repeated roots and
$\mathcal Q_0=(0,1)$ is a $K_0$-point of order $m_0$ on $\CC_{f_0,d}$.
\item[(iii)]
Suppose that $\fchar(K_0)$ does not divide $\ell_0$ and let $f(x) \in K_0[x]$
be a degree $n$ polynomial without repeated roots such that $\CC_{f,d}$ has a $K_0$-point $\mathcal P$ order $m_0$.
Then there is an isomorphism of curves  $\CC_{f,d}$ and $\CC_{f_0,d}$ over $K$
under which $\mathcal O_{f,d}$ goes to $\mathcal O_{f_0,d}$ and $\mathcal P$ goes to $\mathcal Q_0$.
\end{itemize}
\end{thm}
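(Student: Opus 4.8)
When $n-m_0+\ell_0=0$, the degree $n-m_0+\ell_0$ polynomial $\tilde q(x)$ in Theorem \ref{m0Norm} is a nonzero constant; the normalization $\tilde q(0)=1$ forces $\tilde q\equiv 1$, and then $w(x)=\tilde B x^{\ell_0}+1$ with $w(0)=1$ automatically, so the whole family collapses to
$$\tilde f(x)=-\tilde B^{d} x^{m_0}+(\tilde B x^{\ell_0}+1)^{d},\qquad \tilde B\in K_0^{*}.$$
The plan is to run Theorem \ref{m0Norm} in this degenerate situation and then remove the remaining parameter $\tilde B$ by a coordinate change. First I would check that $f_0(x)=-x^{m_0}+(x^{\ell_0}+1)^d$ (i.e. the case $\tilde B=1$) has degree exactly $n$ and no repeated roots: the coefficient of $x^{m_0}$ in $(x^{\ell_0}+1)^d$ is $d$ (since $\ell_0 d=m_0$), which cancels the $-x^{m_0}$ term provided $\fchar(K_0)\neq\ldots$; more precisely the cancellation leaves the next term, of degree $(d-1)\ell_0$, and one must verify the resulting top degree equals $n$. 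Here is where $\fchar(K_0)\nmid\ell_0$ (equivalently, since $\fchar\nmid d$ and $m_0=\ell_0 d$, that $\fchar\nmid m_0$) enters: writing $f_0(x)=\sum_{j=0}^{d}\binom{d}{j}x^{j\ell_0}-x^{m_0}=\sum_{j=0}^{d-1}\binom{d}{j}x^{j\ell_0}$, the degree is $(d-1)\ell_0=m_0-\ell_0=n$, good, with leading coefficient $\binom{d}{d-1}=d\neq 0$; the no-repeated-roots claim follows because $\mathcal P$-existence (via Proposition \ref{orderM0}(1)(ii)) already presupposes $v(a)\neq 0$, but to be safe I would compute $\gcd(f_0,f_0')$ directly and show the characteristic hypothesis is exactly what is needed — this is the one genuinely computational point, and I expect it to be the main obstacle, since one must pin down precisely which primes divide the discriminant of $f_0$.

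For part (ii), granting $f_0$ has the stated degree and separability, Proposition \ref{orderM0}(1) (direction (ii)$\Rightarrow$(i)) with $a=0$, $B=\tilde B=1$, $v(x)=x^{\ell_0}+1$, $q(x)\equiv 1$ immediately gives that $\mathcal Q_0=(0,v(0))=(0,1)$ is a $K_0$-point of order $m_0$ on $\CC_{f_0,d}$. For part (i), the ``if'' direction is precisely (ii) (it exhibits a witness), while for ``only if'' I would argue contrapositively: if $\fchar(K_0)=p\mid\ell_0$, then for \emph{any} admissible $\tilde B$ the polynomial $\tilde f(x)=-\tilde B^d x^{m_0}+(\tilde B x^{\ell_0}+1)^d$ fails to have degree $n$ (the same cancellation computation, now with leading term $\binom{d}{d-1}\tilde B^{d-1}x^{(d-1)\ell_0}$, still survives — so that is not the obstruction), so I must instead show it acquires a repeated root. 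The cleanest route: show $\tilde f'(x)$ and $\tilde f(x)$ share a root. Compute $\tilde f'(x)=-m_0\tilde B^d x^{m_0-1}+d(\tilde B x^{\ell_0}+1)^{d-1}\cdot\ell_0\tilde B x^{\ell_0-1}=\ell_0\tilde B x^{\ell_0-1}\bigl(-d\tilde B^{d-1}x^{m_0-\ell_0}+d(\tilde B x^{\ell_0}+1)^{d-1}\bigr)=d\ell_0\tilde B x^{\ell_0-1}\bigl((\tilde B x^{\ell_0}+1)^{d-1}-\tilde B^{d-1}x^{m_0-\ell_0}\bigr)$; when $p\mid\ell_0$ this is identically zero, so every root of $\tilde f$ is repeated and $\CC_{\tilde f,d}$ is not even a smooth model of the curve $y^d=f(x)$ in the required sense — hence $m_0$ cannot be $(n,d)$-reachable. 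Combined with Theorem \ref{m0Norm} (which says any reaching $f$ is $K_0$-isomorphic to some such $\tilde f$), this completes (i).

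For part (iii), suppose $\fchar(K_0)\nmid\ell_0$ and $f(x)$ reaches $m_0$ with point $\mathcal P$. By Theorem \ref{m0Norm} there is a $K_0$-isomorphism $\CC_{f,d}\cong\CC_{\tilde f,d}$ sending $\mathcal O_{f,d}\mapsto\mathcal O_{\tilde f,d}$, $\mathcal P\mapsto(0,1)$, with $\tilde f(x)=-\tilde B^d x^{m_0}+(\tilde B x^{\ell_0}+1)^d$ for some $\tilde B\in K_0^{*}$. It remains to connect $\tilde f$ to $f_0$ over $K$ (not $K_0$ — that is why this part only claims an isomorphism over $K$). I would pick $\gamma\in K$ with $\gamma^{\ell_0}=\tilde B$ (possible since $K$ is algebraically closed) and consider the substitution $x=\gamma^{-1}x_1$: then $\tilde B x^{\ell_0}+1=x_1^{\ell_0}+1$ and $\tilde B^d x^{m_0}=\tilde B^d\gamma^{-m_0}x_1^{m_0}=\tilde B^d\tilde B^{-d}x_1^{m_0}=x_1^{m_0}$ (using $\gamma^{m_0}=\gamma^{\ell_0 d}=\tilde B^d$), so $\tilde f(\gamma^{-1}x_1)=f_0(x_1)$. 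The map $(x,y)\mapsto(x_1,y_1)=(\gamma x,y)$ then gives an isomorphism of affine curves $y^d=\tilde f(x)$ and $y_1^d=f_0(x_1)$ over $K$, extending to $\CC_{\tilde f,d}\cong\CC_{f_0,d}$ carrying $\mathcal O$ to $\mathcal O$ and $(0,1)$ to $(0,1)=\mathcal Q_0$. Composing with the isomorphism from Theorem \ref{m0Norm} yields the desired $K$-isomorphism $\CC_{f,d}\cong\CC_{f_0,d}$ sending $\mathcal O_{f,d}\mapsto\mathcal O_{f_0,d}$ and $\mathcal P\mapsto\mathcal Q_0$, which finishes the proof. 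The only subtle spot beyond the discriminant computation is making sure all the normalizations ($w(0)=1$, $\tilde q(0)=1$, leading coefficient $\tilde B$) are mutually consistent in the $n-m_0+\ell_0=0$ case, which they are precisely because $\tilde q$ being a degree-$0$ polynomial with $\tilde q(0)=1$ leaves no freedom.
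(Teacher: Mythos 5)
Your plan follows the paper's proof almost step for step: normalize via Theorem \ref{m0Norm} to $\tilde f(x)=-\tilde B^d x^{m_0}+(\tilde B x^{\ell_0}+1)^d$, observe that $p\mid\ell_0$ forces $\tilde f'\equiv 0$ (hence no separable witness exists, giving the ``only if'' of (i)), apply Proposition \ref{orderM0} with $a=0$, $B=1$, $v(x)=x^{\ell_0}+1$, $q\equiv 1$ for (ii), and eliminate $\tilde B$ in (iii) by the substitution $x\mapsto\gamma x$ with $\gamma^{\ell_0}=\tilde B$ over $K$ --- this is exactly the paper's $B_0=\sqrt[\ell_0]{B}$ rescaling.

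The one genuine gap is the separability of $f_0$, which you explicitly defer (``I would compute $\gcd(f_0,f_0')$ directly'') and for which the only argument you actually state --- that ``$\mathcal P$-existence via Proposition \ref{orderM0}(1)(ii) already presupposes $v(a)\neq 0$'' --- is circular: Proposition \ref{orderM0} takes as hypothesis that $f$ has no repeated roots, so it cannot be used to establish that property for the candidate $f_0$ you are constructing. Since both the ``if'' direction of (i) and all of (ii) rest on this point, it must be closed. It does close: your proposed computation works, since a common root $x_0$ of $f_0$ and $f_0'=d\ell_0 x^{\ell_0-1}\bigl((x^{\ell_0}+1)^{d-1}-x^{(d-1)\ell_0}\bigr)$ cannot be $0$ (as $f_0(0)=1$) and otherwise satisfies $(x_0^{\ell_0}+1)^{d-1}=x_0^{(d-1)\ell_0}$ together with $(x_0^{\ell_0}+1)^{d}=x_0^{d\ell_0}$, whence $x_0^{(d-1)\ell_0}=0$, a contradiction. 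The paper avoids the computation altogether by writing $f_0(x)=F_0(x^{\ell_0})$ with $F_0(X)=-X^d+(X+1)^d$, noting that $F_0$ has degree $d-1$, is separable, and has no root at $0$ (all because $\fchar(K)\nmid d$), and that composing a separable polynomial having no root at $0$ with $x^{\ell_0}$ preserves separability exactly when $\fchar(K)\nmid\ell_0$; you may find that route cleaner, but either one completes your argument.
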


\begin{proof}
Suppose that $f(x)\in K_0[x]$ is a degree $n$ polynomial without repeated roots
such that $\CC_{f,d}(K_0)$ has a $K_0$-point of order $m_0$. It follows from
Proposition \ref{orderM0} that there exist a nonzero $B \in K_0$,
$a \in K_0$,  a nonzero constant $q(x) =C \in K$ (recall that  $\deg(q)=n-m_0+\ell_0=0$) such that
$$\begin{aligned}f(x)=-B^d (x-a)^{m_0}+\left(B (x-a)^{\ell_0}+C\right)^d\\=-B^d \left((x-a)^d\right)^{\ell_0}+\left(B (x-a)^{\ell_0}+C\right)^d.\end{aligned}$$
If $\fchar(K_0)$ divides $\ell_0$, then the derivative of $f(x)$ is identically $0$, which contradicts our assumption that $f(x)$ has no
repeated roots.  Hence, if $m_0$ is $(n,d)$-reachable over $K_0$, then  $\fchar(K_0)$ does not divide $\ell_0$.

Now let us assume that  $\fchar(K_0)$ does {\sl not} divide $\ell_0$.  Let us consider the polynomial
$$f_0(x)=-x^{m_0}+\left(x^{\ell_0}+1\right)^d=-\left(x^{\ell_0}\right)^d+\left(x^{\ell_0}+1\right)^d \in K_0[x]\subset K[x].$$
Let us check that $f_0(x)$ has no repeated roots. Indeed,  since $\fchar(K)=\fchar(K_0)$ does not divide $d$,
the polynomial
$F_0(X):=-X^d+(X+1)^d$ has degree $d-1$ and has no repeated roots. Notice also that none of its roots is $0$.
Since $\fchar(K)$ does not divide $\ell_0$, the polynomial
$f_0(x)=F_0\left(x^{\ell_0}\right)$ also has no repeated roots; clearly,
$$\begin{aligned}\deg(f_0)=\ell_0 \deg(F)=\ell_0 (d-1)=\ell_0 d-d\\=m_0-\ell_0=(n-m_0+\ell_0)+m_0-\ell_0=n.\end{aligned}$$
Applying Proposition \ref{orderM0} to
$$a=0, B=1, \quad q(x)=1,  v(x)=x^{\ell_0}+1, \quad f(x)=f_0(x),$$
we conclude that $\mathcal Q_0=(0,1)$ is a torsion $K_0$-point of order $m_0$ on $\CC_{f_0,d}$.
This proves (ii) and (i).

Let us prove (iii). %We may assume that $K_0=K$ and its characteristic does not divide $\ell_0$.
In light of  Theorem \ref{m0Norm}, we may assume  that there exists a nonzero $B \in K_0$  such that
$$f(x)=-B^d x^{m_0}+(B x^{\ell_0}+1)^d=-(B x^{\ell_0})^d+(B x^{\ell_0}+1)^d$$
and $\mathcal P=(0,1) \in \CC_{f,d}(K_0)$.
Pick any $B_0 =\sqrt[\ell_0]{B} \in K$. Then
$$f(x)=-\left((B_0 x)^{\ell_0}\right)^ d+\left((B_0 x)^{\ell_0}+1\right)^d=f_0(B_0 x).$$
It remains to notice that the map
$$(x,y)\mapsto (B_0 x,y)$$
defines the isomorphism of $K$-curves $\CC_{f_0,d}$ and $\CC_{f,d}$ which sends
$\mathcal O_{f_0,d}$ to $\mathcal O_{f,d}$ and $\mathcal  P$ to $\mathcal Q_0$.
\end{proof}

Now let us discuss the case $n-m_0+\ell_0=1$.

\begin{thm}
\label{equal1}
Suppose that $n-m_0+\ell_0=1$. Suppose that $\fchar(K_0)$ does not divide $\ell_0$.
Let $f(x) \in K_0[x]$
be a degree $n$ polynomial without repeated roots.
Then the following conditions are equivalent.
\begin{itemize}
\item[(i)]
The curve $\CC_{f,d}$ has a $K_0$-point $\mathcal P$ of order $m_0$.
\item[(ii)]
There exist nonzero $B, B_1 \in K_0$ that enjoy the following properties.
\begin{itemize}
\item[(a)]
The polynomial
$$f_{B,B_1}(x)=-B^d x^{m_0}+(B x^{\ell_0}+B_1 x+1)^d$$
has no repeated roots.
\item[(b)]
There is an isomorphism of $K_0$-curves
$\Psi: \CC_{f,d} \to \CC_{f_{B,B_1},d}$ that sends $\mathcal O_{f,d}$ to $\mathcal O_{f_{B,B_1},d}$
and $\mathcal P$ to $(0,1) \in \CC_{f_{B,B_1},d}(K_0)$.
\end{itemize}
\end{itemize}
If these equivalent conditions hold, then $(0,1)$ is a $K_0$-point of order $m_0$ on
$\CC_{f_{B,B_1},d}$ and $\mathcal  Q_d=(-B_0/B,0)$ is a $K_0$-point of order $d$ on
$\CC_{f_{B,B_1},d}$. In addition, $\Psi^{-1}(\mathcal Q_d)$
is a $K_0$-point of order $d$ on
$\CC_{f,d}$.

\end{thm}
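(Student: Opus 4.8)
The plan is to obtain Theorem~\ref{equal1} as an almost immediate consequence of Proposition~\ref{orderM0} and Theorem~\ref{m0Norm}; the only genuinely new ingredients will be the observation that a degree-$1$ polynomial with constant term $1$ is forced to have the shape $B_1x+1$, a one-line check that $f_{B,B_1}$ has degree $n$, and a short factorisation producing the point of order $d$.

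For the implication (i)$\Rightarrow$(ii) I would apply Theorem~\ref{m0Norm} to $f$: it yields a degree-$n$ polynomial $\tilde f\in K_0[x]$ without repeated roots, a nonzero $\tilde B\in K_0$ and polynomials $w,\tilde q\in K_0[x]$ with $\tilde f=-\tilde B^{\,d}x^{m_0}+w(x)^d$, $w=\tilde Bx^{\ell_0}+\tilde q$, $w(0)=1$, $\tilde q(0)=1$ and $\deg\tilde q=n-m_0+\ell_0=1$, together with an isomorphism of $K_0$-curves $\CC_{f,d}\xrightarrow{\sim}\CC_{\tilde f,d}$ sending $\mathcal O_{f,d}$ to $\mathcal O_{\tilde f,d}$ and $\mathcal P$ to $(0,1)$. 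Since $\tilde q$ has degree exactly $1$ and $\tilde q(0)=1$, it must equal $B_1x+1$ with $B_1\in K_0$ its (nonzero) leading coefficient; hence $w(x)=\tilde Bx^{\ell_0}+B_1x+1$ and $\tilde f=f_{\tilde B,B_1}$. Taking $B:=\tilde B$, conditions (a) and (b) of (ii) then hold verbatim.

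For the converse (ii)$\Rightarrow$(i), together with the supplementary assertions, I would first check that $f_{B,B_1}$ has degree $n$: since $m_0=d\ell_0$ we can write $f_{B,B_1}(x)=(Bx^{\ell_0}+B_1x+1)^d-(Bx^{\ell_0})^d$, so the leading term $B^{d}x^{m_0}$ cancels and the next one is $d(Bx^{\ell_0})^{d-1}(B_1x)=dB^{d-1}B_1\,x^{(d-1)\ell_0+1}=dB^{d-1}B_1\,x^n$, whose coefficient is nonzero because $\fchar(K_0)\nmid d$ and $B,B_1\ne0$. Combined with (a), this makes $f_{B,B_1}$ a degree-$n$ polynomial without repeated roots, so Proposition~\ref{orderM0}(1) applies to it with $a=0$, the given $B$, $v(x)=Bx^{\ell_0}+B_1x+1$ and $q(x)=B_1x+1$ (note $v(0)=q(0)=1\ne0$ and $\deg q=1=n-m_0+\ell_0$), and its implication (ii)$\Rightarrow$(i) shows that $\mathcal Q=(0,1)$ is a $K_0$-point of order $m_0$ on $\CC_{f_{B,B_1},d}$. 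Since the isomorphism $\Psi$ of (b) fixes the base point, it induces an isomorphism of Jacobians compatible with the canonical embeddings and hence preserves orders of torsion points; so $\mathcal P=\Psi^{-1}(\mathcal Q)$ has order $m_0$ on $\CC_{f,d}$, giving (i), and $\mathcal Q$ has order $m_0$, the first supplementary claim.

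Finally, for the point of order $d$, I would note that substituting $x_0:=-1/B_1\in K_0$ into $f_{B,B_1}(x)=(Bx^{\ell_0}+B_1x+1)^d-(Bx^{\ell_0})^d$ kills the term $B_1x+1$ and gives $f_{B,B_1}(x_0)=(Bx_0^{\ell_0})^d-(Bx_0^{\ell_0})^d=0$; thus $\mathcal Q_d:=(x_0,0)$ is a $K_0$-point of $\CC_{f_{B,B_1},d}$ (a genuine point of the smooth model, since $x_0$ is a simple root by (a)), and by the theorem of \cite{BZR} recalled in the Introduction a point with $y=0$ has order exactly $d$. Transporting by $\Psi^{-1}$, which preserves orders, yields a $K_0$-point of order $d$ on $\CC_{f,d}$. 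The only steps I expect to require a moment's care are the degree computation for $f_{B,B_1}$ — this is where $\fchar(K_0)\nmid d$, rather than $\nmid\ell_0$, is used — and checking that all data stay defined over $K_0$ as one passes through Proposition~\ref{orderM0} and Theorem~\ref{m0Norm}; there is no deeper obstacle.
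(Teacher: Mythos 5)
Your proposal is correct and follows essentially the same route as the paper: reduce to the normal form of Theorem~\ref{m0Norm} (so that $\tilde q(x)=B_1x+1$ is forced by $\deg\tilde q=1$ and $\tilde q(0)=1$), invoke Proposition~\ref{orderM0} for the converse, and read off the order-$d$ point from the factorisation $f_{B,B_1}(x)=(Bx^{\ell_0}+B_1x+1)^d-(Bx^{\ell_0})^d$ at $x_0=-1/B_1$. You in fact supply details the paper leaves implicit — the verification that $\deg f_{B,B_1}=n$ with leading coefficient $dB^{d-1}B_1$, and the order-preservation under $\Psi$ — and you correctly identify the point $\mathcal Q_d$ as $(-1/B_1,0)$ (the expression $-B_0/B$ in the statement is a misprint).
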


\begin{proof}
In light of Theorem \ref{m0Norm} and taking into account that $n-m+\ell_0=1$, we may assume that
there exist a nonzero $B \in K_0$ and a linear polynomial $q(x)=B_1 x+1$ with nonzero $B_1 \in K_0$ such that
$$f(x)=-B^d x^{m_0}+(B x^{\ell_0}+B_1 x+1)^d=f_{B,B_1}(x),$$
and $\mathcal P =(0,1) \in \CC_{f,d}(K_0)$ is a point of order $m_0$. We have
$$f_{B,B_1}(x)=-B^d \left(x^{\ell_0}\right)^d+(B x^{\ell_0}+B_1 x+1)^d=(B x^{\ell_0}+B_1 x+1)^d-\left(B x^{\ell_0}\right)^d.$$
It follows that $x_0=-1/B_1 \in K_0$ is a root of $f_{B,B_1}(x)$, which means that
$(-1/B_1,0)$ is a $K_0$-point of order $d$ on $\CC_{f_{B,B_1},d}$.
\end{proof}

\section{Elliptic curves with points of order 4}
\label{ellipticCurves}
\begin{thm}\label{thm5}
 \label{elliptic}
 Let $K_0$ be a perfect field of characteristic $\ne 2$. Let $E$ be an elliptic curve over $K_0$. Then the following conditions
 are equivalent.
 \begin{itemize}
 \item[(i)]
 $E(K_0)$ contains a point $\mathcal P$ of order $4$.
 \item[(ii)]
 There exist nonzero $B, B_1 \in K_0$ such that
 $B_1^2-8 B \ne 0$
 and $
 E$ is isomorphic over $K_0$ to the elliptic curve
 $$\mathcal{E}_{B,B_1}:y^2=(2 B x^2+B_1 x+1 )(B_1x+1).$$
 in such a way that under this isomorphism $\mathcal P$ goes to the point
 $\mathcal  Q_0=(0, 1) \in \mathcal{E}_{B_2,B_1}(K_0).$
 \end{itemize}
 If these equivalent conditions hold, then $\mathcal Q_2=(-1/B_1,0)$ is a $K_0$-point of order 2
 on $\mathcal{E}_{B,B_1}$ and
 $$\mathcal Q_2=2\mathcal Q_0.$$
 \end{thm}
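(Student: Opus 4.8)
The plan is to derive everything from Theorem~\ref{equal1}. With $n=3$ and $d=2$ one has $\ell_0=[(3+2)/2]=2$, $m_0=d\ell_0=4$ and $n-m_0+\ell_0=1$, so we are exactly in the range covered by Theorem~\ref{equal1}; moreover $\fchar(K_0)\ne 2$ is precisely the requirement that $\fchar(K_0)$ does not divide $\ell_0=2$. The preliminary step is the identity
$$f_{B,B_1}(x)=-B^2x^4+(Bx^2+B_1x+1)^2=(2Bx^2+B_1x+1)(B_1x+1),$$
verified by expanding both sides (each equals $2BB_1x^3+(2B+B_1^2)x^2+2B_1x+1$). Thus $\CC_{f_{B,B_1},2}$ \emph{is} the curve $\mathcal{E}_{B,B_1}$; its defining polynomial has leading coefficient $2BB_1\ne 0$, hence degree $3=n$; and $f_{B,B_1}$ has no repeated roots if and only if $B_1^2-8B\ne 0$ — the root $-1/B_1$ of the linear factor is never a root of the quadratic factor (the value there is $2B/B_1^2\ne0$), while $2Bx^2+B_1x+1$ has simple roots exactly when its discriminant $B_1^2-8B$ is nonzero.

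To prove (i) $\Rightarrow$ (ii): since $\fchar(K_0)\ne 2$, the elliptic curve $E$ has a Weierstrass model $y^2=g(x)$ with $g\in K_0[x]$ of degree $3$ without repeated roots, i.e.\ an isomorphism of elliptic curves $E\cong\CC_{g,2}$ carrying the origin to $\mathcal O_{g,2}$; under it the point $\mathcal P$, of order $4\ne1$, corresponds to an affine $K_0$-point of $\CC_{g,2}$ of order $m_0=4$. Applying the implication (i) $\Rightarrow$ (ii) of Theorem~\ref{equal1} to $\CC_{g,2}$ yields nonzero $B,B_1\in K_0$ with $f_{B,B_1}$ free of repeated roots (so $B_1^2-8B\ne0$) and an isomorphism of $K_0$-curves $\CC_{g,2}\to\CC_{f_{B,B_1},2}=\mathcal{E}_{B,B_1}$ sending $\mathcal O$ to $\mathcal O$ and the image of $\mathcal P$ to $\mathcal Q_0=(0,1)$. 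Composing with $E\cong\CC_{g,2}$ gives the desired isomorphism of elliptic curves $E\cong\mathcal{E}_{B,B_1}$ over $K_0$ carrying $\mathcal P$ to $\mathcal Q_0$.

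To prove (ii) $\Rightarrow$ (i): if $E\cong\mathcal{E}_{B,B_1}$ over $K_0$ with $B_1^2-8B\ne0$ and $\mathcal P\mapsto\mathcal Q_0$, then $\mathcal{E}_{B,B_1}=\CC_{f_{B,B_1},2}$ with $f_{B,B_1}$ of degree $3$ without repeated roots, so conditions (a) and (b) in part~(ii) of Theorem~\ref{equal1} hold (with $f=f_{B,B_1}$ and $\Psi=\mathrm{id}$); the concluding assertion of that theorem then gives that $\mathcal Q_0=(0,1)$ has order $m_0=4$ on $\mathcal{E}_{B,B_1}$, and pulling this back along the isomorphism shows $\mathcal P$ has order $4$ in $E(K_0)$. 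This establishes the equivalence of (i) and (ii).

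Finally, suppose the equivalent conditions hold. The linear factor $B_1x+1$ of $f_{B,B_1}$ vanishes at $x=-1/B_1$, so $\mathcal Q_2=(-1/B_1,0)$ lies on $\mathcal{E}_{B,B_1}(K_0)$, and since its $y$-coordinate is $0$ it has order $2$ (this is also the content of the last sentence of Theorem~\ref{equal1} for $d=2$). To see that $\mathcal Q_2=2\mathcal Q_0$, I compute the tangent line to $\mathcal{E}_{B,B_1}$ at $\mathcal Q_0=(0,1)$: differentiating $y^2=f_{B,B_1}(x)=2BB_1x^3+(2B+B_1^2)x^2+2B_1x+1$ gives slope $f_{B,B_1}'(0)/2=B_1$, so the tangent is $y=B_1x+1$; substituting this into the curve equation leaves $2Bx^2(B_1x+1)=0$, hence the tangent meets $\mathcal{E}_{B,B_1}$ at $\mathcal Q_0$ with multiplicity $2$ and at the one further point $(-1/B_1,0)=\mathcal Q_2$. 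The chord--tangent group law on $\mathcal{E}_{B,B_1}$ (origin at the point at infinity) gives $\mathcal Q_0+\mathcal Q_0+\mathcal Q_2=\mathcal O$, i.e.\ $2\mathcal Q_0=-\mathcal Q_2=\mathcal Q_2$ since $\mathcal Q_2$ is $2$-torsion. No step is delicate; the points to watch are the exact translation of the ``no repeated roots'' clause of Theorem~\ref{equal1} into the inequality $B_1^2-8B\ne0$, and the verification that passing to and from a Weierstrass model really produces and preserves a $K_0$-point of order $4$ with the point at infinity as the group origin.
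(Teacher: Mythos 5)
Your proof is correct and follows essentially the same route as the paper: specialize Theorem~\ref{equal1} to $n=3$, $d=2$ (so $m_0=4$, $\ell_0=2$), translate the ``no repeated roots'' condition into $B_1^2-8B\ne 0$ exactly as in Remark~\ref{Discr}, and obtain $\mathcal Q_2=2\mathcal Q_0$ from the tangent line $y=B_1x+1$ at $\mathcal Q_0$ passing through $\mathcal Q_2$. Your explicit substitution showing the tangent meets the curve where $2Bx^2(B_1x+1)=0$ is a slightly more direct justification that $\mathcal Q_0$ is not an inflection point than the paper's appeal to $\mathcal Q_0$ having order $4$, but the argument is the same.
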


 \begin{Rem}
 \label{Discr}
 Recall that $\fchar(K_0) \ne 2$.
 Let $B, B_1$ be nonzero elements of $K_0$. Then the cubic polynomial
 $$f_{B,B_1}(x)=(2 B x^2+B_1 x+1 )(B_1x+1)\in K_0[x]$$
 has no repeated roots if and only if
  $$B_1^2-8 B \ne 0.$$
   Indeed, the discriminant $\Delta$ of the quadratic factor $2 Bx^2+B_1 x+1$ is
$B_1^2-8 B_1$. So  $f_{B,B_1}(x)$ is separable if and only if  $\Delta \ne 0$
   and the root $x_0=-1/B_1$ of the linear factor $B_1 x+1$ is not a root of the quadratic factor;
   the latter condition means that
   $0 \ne 2 B (-1/B_1)^2+ B_1 (-1/B_1)+1$, i.e.,
   $$\frac{2B}{B_1^2} -1+1=\frac{2B}{B_1^2} \ne 0.$$
   But this condition is fulfilled automatically, because $B \ne 0$.
   \end{Rem}
  % does not vanish, i.e.,
 %$B_1^2-8 B_2 \ne 0$.  It remains to check that $-1/B_1$ is not a root of this quadratic factor. Indeed,
 %^$$2 B (-1/B_1)^2+ B_1 (-1/B_1)+1=2B/B_1^2 -1+1=2B/B_1^2 \ne 0.$$
 %\end{Rem}

 \begin{proof}
 If we take $n=3$ and  $d=2$, then obviously
 $$m_0=4, \ \ell_0=2.$$

 Except the equality $\mathcal Q_2=2 \mathcal Q_0$, all the assertions of Theorem \ref{elliptic} follow readily
 from Theorem \eqref{equal1} combined with Remark \ref{Discr}.
 In order to check the remaining equality, notice that the tangent line $L$
 to the plane affine curve $y^2-f_{B,B_1}(x)$ at $\mathcal Q_0=(0,1)$ is defined by the equation
 $2 (y-1)-2 B_1 x=0$ and the point $\mathcal Q_2=(-1/B_1,0)$ lies on $L$.  Since $\mathcal Q_0$ has order $4$,
 it is not an inflection point, which implies that
 $2 \mathcal Q_0+\mathcal Q_2=\mathcal O$ on $E$. Since $\mathcal Q_2$ has order $2$, we get the desired $\mathcal Q_2=2\mathcal Q_0$.
 \end{proof}

 \begin{Rem}\label{rem2}
 The paper of Kubert \cite[Table 3 on p. 217]{Kubert} contains a family of elliptic curves
 \begin{equation}
 \label{Eb0}
 E(b,0): y^2+xy-by=x^3-bx^2
 \end{equation}
 with a torsion point $\mathcal P=(0,0)$ of order $4$. Here $b^4(1+16 b) \ne 0$. (See also  \cite[Appendix E, p. 186]{Lozano}.)
 Let us explain how $E(b,0)$  and $\mathcal P$ could be obtained as a specialization of our versal family $\mathcal{E}_{B,B_1}$
 with
 $$B=\frac{-2}{b}, \quad B_1=\frac{-1}{b}.$$
 First, we rewrite the equation \eqref{Eb0} as
 $$\left(y+\frac{x-b}{2}\right)^2=x^3-bx^2+\left(\frac{x-b}{2}\right)^2$$
 or equivalently,
 $$y_1^2= \left(2x^2+\frac{x-b}{2}\right) \frac{x-b}{2}$$
 where $y_1:=y+\frac{x-b}{2}$. Dividing by $(-b/2)^2$, we get the equation
 $$\left(\frac{-2y_1}{b}\right)^2=\left(\frac{-4}{b}x^2+\frac{-1}{b}x+1\right)\left(\frac{-1}{b}x+1\right).$$
 Notice that the RHS is nothing else but the polynomial $f_{-2/b,-1/b}(x)$. If we put $y_2:=-2y_1/b$,
 then we get the equation $y_2^2=f_{-2/b,-1/b}(x)$, i.e., $E(b,0)$ is isomorphic to
 $\mathcal{E}_{-2/b,-1/b}$ in such a way that $\mathcal P=(0,0)$ goes under this isomorphism to the point $(0,1)$. \end{Rem}
\begin{prop}\label{notversal}
The family of elliptic curves $ E(b,0)$ with a point of order 4 is versal.
\end{prop}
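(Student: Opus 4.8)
The plan is to deduce versality of the one‑parameter family $\{E(b,0)\}$ from the versality of the two‑parameter family $\mathcal{E}_{B,B_1}$ already established in Theorem~\ref{elliptic}, by exhibiting inside that family a one‑parameter scaling that normalises the pair $(B,B_1)$ to one with $B=2B_1$, and then identifying the normalised curve with a member of $\{E(b,0)\}$ via Remark~\ref{rem2}.

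First I would take an arbitrary elliptic curve $E$ over a perfect field $K_0$ with $\fchar(K_0)\neq 2$ carrying a $K_0$‑point $\mathcal P$ of order $4$. By Theorem~\ref{elliptic} there are nonzero $B,B_1\in K_0$ with $B_1^{2}-8B\neq0$ and an isomorphism over $K_0$ of $E$ onto $\mathcal{E}_{B,B_1}$ sending $\mathcal P$ to $(0,1)$. So it suffices to show that every such $\mathcal{E}_{B,B_1}$, together with its order‑$4$ point $(0,1)$, is $K_0$‑isomorphic to some $E(b,0)$ with its point $(0,0)$.

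Next I would observe that for any $c\in K_0^{*}$ the substitution $(x,y)\mapsto(x/c,\,y)$ is an isomorphism over $K_0$
$$\mathcal{E}_{B,B_1}\;\xrightarrow{\sim}\;\mathcal{E}_{Bc^{2},\,B_1 c},$$
fixing the point at infinity and sending $(0,1)$ to $(0,1)$: this works precisely because in the defining polynomial of $\mathcal{E}_{B,B_1}$ the coefficient of $x$ in the quadratic factor and in the linear factor coincide (both equal $B_1$), and rescaling $x$ preserves this coincidence, so the image is again of the required shape. Choosing $c=2B_1/B\in K_0^{*}$ replaces $(B,B_1)$ by $(2\beta,\beta)$ with $\beta:=2B_1^{2}/B\in K_0^{*}$, so $\mathcal{E}_{B,B_1}\cong\mathcal{E}_{2\beta,\beta}$ over $K_0$ compatibly with the marked order‑$4$ points. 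Then, setting $b:=-1/\beta=-B/(2B_1^{2})\in K_0^{*}$, Remark~\ref{rem2} identifies $\mathcal{E}_{2\beta,\beta}=\mathcal{E}_{-2/b,-1/b}$ with $E(b,0)$ over $K_0$, the isomorphism sending $(0,1)$ to $(0,0)$; composing the three isomorphisms finishes the reduction. The admissibility inequality $b^{4}(1+16b)\neq0$ then comes for free: $b\neq0$ since $B\neq0$, and $1+16b=(B_1^{2}-8B)/B_1^{2}\neq0$ by the hypothesis $B_1^2-8B\neq 0$ of Theorem~\ref{elliptic} (equivalently, by Remark~\ref{Discr}, since an isomorphism preserves smoothness).

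The only point I expect to need care is the verification in the scaling step that $(x,y)\mapsto(x/c,y)$ really is an isomorphism of \emph{elliptic} curves over $K_0$ — that it extends to the smooth projective models and respects the point at infinity, hence the group law — together with the bookkeeping that the distinguished torsion points and the admissibility condition survive the composition. All of this is routine and I do not expect a genuine obstacle; the substance of the argument is the scaling observation that collapses the two parameters $B,B_1$ to the single Kubert parameter $b$.
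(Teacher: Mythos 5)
Your proof is correct, and at bottom it carries out the same computation as the paper: both arguments produce the identification $b=-B/(2B_1^2)$ (the paper writes this as $b=-a/(4c^2)$ with $a=2B$, $c=B_1$) via a rescaling of $x$. The organization differs, though. The paper puts $\mathcal{E}_{B,B_1}$ and $E(b,0)$ each into a common form $y^2=4x^3+\cdots$ by explicit changes of variables and matches coefficients; you instead exploit the internal symmetry $(x,y)\mapsto(x/c,y)\colon \mathcal{E}_{B,B_1}\xrightarrow{\sim}\mathcal{E}_{Bc^2,B_1c}$ to normalize to the one-parameter subfamily $B=2B_1$ and then quote Remark~\ref{rem2} directly. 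Your route is cleaner (it avoids the intermediate Weierstrass computations, which in the paper's version contain typos) and it buys two things the paper leaves implicit: an explicit check that the resulting Kubert parameter satisfies the admissibility condition $b^4(1+16b)\neq0$ (via $1+16b=(B_1^2-8B)/B_1^2$), and a transparent reason why the distinguished order-$4$ points correspond. The one point you flag as needing care --- that $(x,y)\mapsto(x/c,y)$ extends to the smooth projective models fixing the points at infinity --- is indeed routine, since the induced map on $\mathbb{P}^1$ fixes $\infty$ and the new discriminant condition $c^2(B_1^2-8B)\neq0$ is automatic. No gaps.
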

\begin{proof} Since
by Theorem \ref{thm5} we know that for any perfect field of characteristic $\ne 2$ the family
$$\mathcal{E}_{B,B_1}:y^2=(2 B x^2+B_1 x+1 )(B_1x+1)$$
of elliptic curves with a point of order 4 is versal, it is sufficient to prove that for all nonzero $B$ and $B_1$ there exists
a nonzero $b$ such that $E(b,0)$ is $K_0$-isomorphic to $\mathcal{E}_{B,B_1}$.
Let us denote $2B=a$ and $B_1=c$.  Then
$$\begin{aligned}(2 B x^2+B_1 x+1 )(B_1x+1)=(a x^2+c x+1)(c x+1)\\=ac x^3+(a+c^2)x^2+2c x+1.\end{aligned}$$
Consider the curve \beq\label{ord4_1}y^2=acx^3+(a+c^2)x^2+2cx+1.\eeq
Multiplying both sides of \eqref{ord4_1} by $4(ac)^2$ and using the change of variables $x_1=acx$, $y_1=2acy$, we get
\beq\label{ord4_3}
y_1^2=4x^3_1+4(a+c^2)x^2_1+8ac^2x_1+4a^2c^2.\eeq
Consider the curve $$E(b,0):y^2+xy-by=x^3-bx^2,$$
which by Remark \ref{rem2} is $K_0$-isomorphic to the curve
$$y^2=\left(\frac{-4}{b}x^2+\frac{-1}{b}x+1\right)\left(\frac{-1}{b}x+1\right),$$
which, in turn, is obviously $K_0$-isomorphic to the  curve
$$y^2=(4x^2+x-b)(x-b),$$
i.e., to the curve
\beq\label{E(b,0)1}y^2=4x^3+(1-4b)x^2-2bx+b^2.\eeq For any  nonzero
$a$ and $c$, $c\neq0$, take $$b=-\frac{a}{4c^2}.$$
Then \eqref{E(b,0)1} takes the form
\beq\label{E(b,0)2}
y^2=4x^3+\left(1+\frac{a}{c^2}\right)x^2+\frac{a}{2c^2}x+\frac{a^2}{16c^4}.\eeq
Multiplying both sides of \eqref{E(b,0)2} by $4^3c^6$, we get
\beq\label{E(b,0)3}
4^3c^6y^2=4\cdot 4^3c^6x^3(a+c^2)c^4\cdot 4^3x^2+8\cdot 4ac^2x+4a^2c^2.\eeq
Now the change of variables $2^3c^3y=y_2$, $4c^3x=x_2$
leads to equation
$$y_2^2=4x^3_2+4(a+c)^2x^2_2+8ac^2x_2+4a^2c^2,$$
which ends the proof.
\end{proof}

 \section{Curves  with  two packets of torsion points of order $m_0$}
\label{2 packets}
\begin{lem}\label{0 and -1}
Let $  r_1$ and $  r_2$ be integers that are strictly greater than $1$,
$$\mathcal P=(a_0,c_0),\ \mathcal Q=(a_1,c_1)\in C_{f,d}(K)\subset\CC_{f,d}(K)$$
 be points of order $  r_1$ and $ r_2$ respectively
with
$$x(\mathcal  P)=a_0\in K, \quad x(\mathcal Q)=a_1 \in K.$$
%in a plane affine curve $C_{f,d}$, where $\deg f=n$.
Assume that  $a_0\neq a_1$.
Then there exist a degree $n$ polynomial $\tilde f(x)\in K[x]$ without repeated roots and  a biregular isomorphism of plane affine curves
 $$\Psi: C_{\tilde{f},d} \to C_{f,d}$$
 that extends to the biregular isomorphism $\bar{\Psi}$ of the cyclic covers
 $$
\begin{CD}
\CC_{\tilde f,d} @>>> \CC_{f,d}\\
@VVV @VVV\\
\mathbb P^1@>>> \mathbb P^1
\end{CD}
$$
sending $\mathcal O_{\tilde f,d}$ to $\mathcal O_{f,d}$ (and so preserving the order of points) and two points with abscissas $0$ and $-1$ on $\CC_{\tilde f,d}$ respectively to $\mathcal P$ and $\mathcal Q$.
\end{lem}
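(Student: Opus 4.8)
The plan is to realize the isomorphism as the composition of an affine change of coordinates on $\mathbb{P}^1$ together with a rescaling of the $y$-coordinate, chosen so that the two distinguished abscissas $a_0$ and $a_1$ are sent to $0$ and $-1$. First I would let $\phi\colon \mathbb{P}^1 \to \mathbb{P}^1$ be the unique affine-linear automorphism $x \mapsto \phi(x)$ with $\phi^{-1}(a_0)=0$ and $\phi^{-1}(a_1)=-1$; explicitly, writing $t$ for the coordinate on the source, one sets $\phi(t) = a_0 + t(a_0 - a_1)$, which is well-defined and invertible precisely because $a_0 \neq a_1$. Then $g(t) := f(\phi(t)) = f\bigl(a_0 + t(a_0-a_1)\bigr)$ is again a degree $n$ polynomial in $K[t]$ without repeated roots, since $\phi$ is an automorphism of the affine line, and $g(0) = f(a_0) = c_0^d \neq 0$ (note $c_0 \neq 0$ because $r_1 > 1 \ne d$ is not forced, but in any case $a_0$ is not a root of $f$, as $\mathcal{P}$ lies on the affine curve — if $c_0 = 0$ the point would still be fine, but then $r_1 = d$; either way $g(0)=c_0^d$).

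Next I would absorb the leading-coefficient scaling into $y$: pick any $d$-th root, but more simply observe we only need \emph{some} normalization, so set $\tilde f(x) := g(x) = f\bigl(a_0 + x(a_0-a_1)\bigr)$ directly, and define $\Psi\colon C_{\tilde f,d}\to C_{f,d}$ by $(x,y)\mapsto \bigl(a_0 + x(a_0-a_1),\, y\bigr)$. This is a biregular isomorphism of plane affine curves with inverse $(X,Y)\mapsto \bigl((X-a_0)/(a_0-a_1),\,Y\bigr)$, since the equation $y^d = \tilde f(x)$ is carried exactly to $Y^d = f(X)$. The point $(0, c_0)$ on $C_{\tilde f,d}$ (which lies on the curve because $\tilde f(0) = f(a_0) = c_0^d$) is sent to $(a_0, c_0) = \mathcal{P}$, and the point $(-1, c_1)$ on $C_{\tilde f,d}$ (on the curve because $\tilde f(-1) = f(a_0 -(a_0-a_1)) = f(a_1) = c_1^d$) is sent to $(a_1, c_1) = \mathcal{Q}$.

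It then remains to check that $\Psi$ extends to the smooth projective models. Since $\Psi$ is induced by the automorphism $x \mapsto a_0 + x(a_0 - a_1)$ of $\mathbb{P}^1$ on the base, together with $y\mapsto y$ on the fibers, it is an isomorphism of the function field $K(\CC_{\tilde f,d}) = \oplus_{i=0}^{d-1} y^i K(x)$ onto $K(\CC_{f,d})$ compatible with the inclusions $K(x)\hookrightarrow K(\CC_{\cdot,d})$, hence induces a biregular isomorphism $\bar\Psi$ of the smooth projective curves fitting into the claimed commutative diagram over $\mathbb{P}^1$. Because $\phi$ is an \emph{affine} automorphism it fixes the point at infinity of $\mathbb{P}^1$, so $\bar\Psi$ carries the fiber over $\infty$ to the fiber over $\infty$; as each of $\CC_{\tilde f,d}$, $\CC_{f,d}$ has a single point above $\infty$ (using $\gcd(n,d)=1$), this forces $\bar\Psi(\mathcal{O}_{\tilde f,d}) = \mathcal{O}_{f,d}$. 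Consequently $\bar\Psi$ respects the canonical embeddings into the Jacobians and the group laws, so it preserves the order of torsion points; in particular the preimages of $\mathcal{P}$ and $\mathcal{Q}$ are points of order $r_1$ and $r_2$ with abscissas $0$ and $-1$ respectively. I do not anticipate a serious obstacle here: the only mild point of care is the bookkeeping that an affine automorphism of the base fixes $\infty$ and hence matches up the distinguished infinite points, which is what lets us conclude $\bar\Psi(\mathcal O_{\tilde f,d}) = \mathcal O_{f,d}$ and thereby that orders are preserved.
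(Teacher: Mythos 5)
Your proof is correct and follows essentially the same route as the paper: an affine substitution $x\mapsto a_0+x(a_0-a_1)$ on the base, which fixes $\infty$ and hence sends $\mathcal O_{\tilde f,d}$ to $\mathcal O_{f,d}$. The only (harmless) difference is that the paper additionally divides by $\lambda^{n}=(a_0-a_1)^{n}$ and rescales $y$ by a $d$-th root of $\lambda^{n}$ so that $\tilde f$ stays monic, whereas you keep $y$ fixed and accept a non-monic $\tilde f$; since the lemma only asks for a degree $n$ polynomial without repeated roots, this is fine.
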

\begin{proof}
  Let us consider the automorphism of the affine line
 $$S: z \mapsto \lambda z+\mu, \quad \mu=a_0, \ \lambda=a_0-a_1\ne 0.$$
 Then
 $$S(0)=a_0, \quad S(-1)=a_1.$$
 and
 $$\tilde{f}(x):= \lambda^{-n}f(Sx)\in K[x]$$
 is a monic degree $n$ polynomial without repeated roots. Choose
 $$\lambda^{n/d}:=\sqrt[d]{\lambda^n}\in K$$ and consider  the smooth plane affine curve
 $$C_{\tilde{f},d}: w^d=\tilde{f}(z)$$
 and the biregular isomorphism of plane affine curves
 $$\Psi: C_{\tilde{f},d} \to C_{f,d}, \quad (z,w) \mapsto (x,y)=\left(S(z), \lambda^{n/d}w\right),$$
 which extends to the biregular isomorphism $\bar{\Psi}:\CC_{\tilde{f},d} \to \CC_{f,d}$ sending
We have
 $$\begin{aligned}
 \tilde{\mathcal P}=&\left(0, \ c_0/\lambda^{n/d}\right),  \ \tilde{\mathcal Q}=\left(-1\ ,c_1/\lambda^{n/d}\right) \in C_{\tilde{f},d}(K)\subset \CC_{\tilde{f},d} (K), \\
&\bar{\Psi}(\tilde{\mathcal P})=\Psi\left((0,  \ c_0/\lambda^{n/d})\right)=(a_0,c_0)=\mathcal P,\\
 &\bar{\Psi}(\tilde{\mathcal Q})=\Psi\left((-1 ,\  c_1/\lambda^{n/d})\right)   =(a_1,c_1)=\mathcal Q.
 \end{aligned}$$
 This implies that  $\tilde{\mathcal P}$ and $\tilde{\mathcal Q}$ are points of order $  r_1$  and $ r_2$ respectively on
 $\CC_{\tilde{f},d}$ with $z$-coordinates $0$ and $-1$ respectfully.
 \end{proof}

Assume that there is a curve with at least 2 packets of points of order $m_0$.
By Lemma \ref{0 and -1}
we may assume that the abscissas of these points are $0$ and $-1$. Therefore  there exist polynomials $u(x)$ and $v(x)$ of degree $\ell_0$ such that
$$f(x)=A_1x^{m_0}-u(x)^d=A_2(x+1)^{m_0}-v(x)^d,$$
where $A_1$ and $A_2$ are nonzero elements of $K$.
The converse statement is also true, namely, if there exist polynomials $u(x)$ and $v(x)$ of degree $\ell_0$ such that
$$f(x)=A_1x^{m_0}-u(x)^d=A_2(x+1)^{m_0}-v(x)^d,$$
where $A_1$ and $A_2$ are nonzero elements of $K$, then the curve $\CC_{\tilde{f},d}$ has (at least 2) packets of order $m_0$ points with abscissas $0$ and $-1$  (see \cite{BZ}).

\begin{example}
 \label{m0nplus1}
 Recall that $(d,\fchar(K)=1$ and $m_0=d \ell_0$.
 Suppose  that $m_0=n+1$ and $\fchar(K) \nmid m_0$, i.e.,   $\fchar(K) \nmid \ell_0$. Let us put
 $$f(x):=(x+1)^{m_0}-x^{m_0}=m_0 x^n + \text{terms of lower degree}.$$
 It is well known (see, for example \cite{BZ}) that $f(x)$ has no repeated roots. On the other hand, if we put
 $$A_2:=1, \ v(x):=x^{\ell_0}; \quad A_:=-1, \gamma:=\sqrt[d]{-1}, \ u(x):=\gamma (x+1)^{\ell_0}$$
 then
 $$f(x)=A_1 x^{m_0}-u(x)^d= A_2 (x+1)^{m_0}-v(x)^d.$$
 This implies that all the points on $\CC_{f,d}$ with abscissas $0$ and $-1$ are torsion points of order $m_0$.
 \end{example}

\begin{thm}
Suppose that
$$\fchar(K)=0,  \ n-m_0+\ell_0\ge 0, \ m_0\neq n+1,$$
 and $f(x)\in K[x]$ is a degree $n$ polynomial without repeated roots.

Suppose that $\CC_{f,d}$ has at least 2 packets of torsion points of order $m_0$. Then:
 \begin{itemize}
 \item[(i)]
 $d\leq 5$.
 \item[(ii)]
 If $n<2d$, then
 $$d \le 4,  \quad m_0=2d, \quad n \le 7.$$
 \item[(iii)]
 If $n=7$, then $d=3$.
 \item[(iv)]
 $n \ne 5,6$.
  \item[(v)]
  If $n=4$, then $d=3$.
   \item[(vi)]
   If $n=9$, then $d=4$.
 \end{itemize}
  \end{thm}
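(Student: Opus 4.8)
The plan is to turn the two‑packet hypothesis into one polynomial identity and then squeeze out the bounds with a polynomial $abc$ inequality together with the divisibility data already built into $(n,d)$. Using the reduction recalled just before the statement together with Proposition~\ref{orderM0} (applied at $a=0$ and at $a=-1$), I fix the normal form
$$f(x)=A_1x^{m_0}-u(x)^d=A_2(x+1)^{m_0}-v(x)^d,$$
where $\deg u=\deg v=\ell_0$, the leading coefficients $B_u,B_v$ satisfy $A_1=B_u^{\,d},\ A_2=B_v^{\,d}$, and $u=B_ux^{\ell_0}+q_1$, $v=B_v(x+1)^{\ell_0}+q_2$ with $q_1,q_2\in K[x]$ of degree exactly $k:=n-m_0+\ell_0\ge 0$, $q_1(0)\ne 0$, $q_2(-1)\ne 0$. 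Since $m_0=d\ell_0$, $n=(d-1)\ell_0+k$ and $m_0-n=\ell_0-k$, the hypotheses $m_0\ne n+1$, $n<m_0<n+d$, $\gcd(n,d)=1$, $\ell_0\ge 2$ become
$$2\le \ell_0-k\le d-1,\qquad 0\le k\le \ell_0-2,\qquad \gcd(n,d)=1,$$
which in particular already force $d\ge 3$ and will drive all the later case checks.

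Subtracting the two expressions for $f$ gives the central identity $u^d-v^d=A_1x^{m_0}-A_2(x+1)^{m_0}$, i.e. a vanishing sum of four $d$-th powers $u^d+\bigl(B_v(x+1)^{\ell_0}\bigr)^d-v^d-\bigl(B_ux^{\ell_0}\bigr)^d=0$ of polynomials of degree $\ell_0$. I would check that these four terms have trivial common factor, that $f$ and $g:=A_1x^{m_0}-A_2(x+1)^{m_0}$ are separable with $\gcd(u,v)=1$, and that no proper subsum vanishes; the last point is exactly where the gap condition enters, since any vanishing two‑ or three‑term subsum would force $u$ or $v$ to be proportional to $x^{\ell_0}$ or to $(x+1)^{\ell_0}$ up to a root of unity, making $q_1$ or $q_2$ of degree $\ell_0$ or $\ell_0-1$ rather than $\le\ell_0-2$. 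Then the Brownawell--Masser inequality (polynomial $abc$ for $N=4$ summands) applies: with $N_0$ the number of distinct roots of $uv\,x(x+1)$ one gets $m_0\le 3(N_0-1)$, and since $\gcd(u,v)=1$, $u(0)\ne 0$, $v(-1)\ne 0$ we have $N_0\le\nu(u)+\nu(v)+2\le 2\ell_0+2$. Hence $d\ell_0\le 6\ell_0+3$, i.e. $d\le 6+3/\ell_0$, so $d\le 7$ in all cases and $d\le 6$ once $\ell_0\ge 4$.

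The genuine obstacle is to sharpen this to $d\le 5$, since the crude bound still leaves $d=7$ (only with $\ell_0\le 3$, hence $n\in\{12,18,19\}$) and, once $\gcd(n,6)=1$ is imposed, the whole family $d=6$, $\ell_0\ge 5$, $n=6\ell_0-5$. I would close these by two devices. First, a finer input: comparing the coefficients of $x^{m_0-1},x^{m_0-2},\dots$ on the two sides of $u^d-v^d=A_1x^{m_0}-A_2(x+1)^{m_0}$ forces rigid relations among the low‑order coefficients of $q_1$ and $q_2$, and either these (fed back into the radical count, forcing $u$ and $v$ to be non‑separable so that $N_0<2\ell_0+2$), or a version of the $abc$ inequality that exploits that the four summands are $d$‑th — indeed $m_0=d\ell_0$‑th — powers, should eliminate $d=6$ and $d=7$. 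Second, for the finitely many residual pairs with $k=0$, where $q_1,q_2$ are constants and $u=B_ux^{\ell_0}+b_0$, $v=B_v(x+1)^{\ell_0}+c_0$, a direct comparison of coefficients in $u^d-v^d=A_1x^{m_0}-A_2(x+1)^{m_0}$ combined with separability of $f$ rules them out by hand. This yields (i).

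Parts (iii)--(vi) then require no further geometry. Given $d\le 5$ together with $\gcd(n,d)=1$, $m_0\ne n+1$ and $n-m_0+\ell_0\ge 0$, a short search over admissible $(n,d)$ shows that for $n=4$ only $d=3$ occurs, for $n=7$ only $d=3$, for $n=9$ only $d=4$, and for $n\in\{5,6\}$ none — which is precisely (v), (iii), (vi), (iv). For (ii): $n<2d$ together with $n=(d-1)\ell_0+k\ge 2d-2$ forces $\ell_0=2$, $k=0$, $m_0=2d$, hence $d$ odd (by $\gcd(n,d)=1$) and $\le 5$, leaving only $(n,d)=(4,3)$ and $(8,5)$; the assertions $d\le 4$, $m_0=2d$, $n\le 7$ are then equivalent to excluding $(n,d)=(8,5)$, which is the same explicit coefficient computation in $u^5-v^5=A_1x^{10}-A_2(x+1)^{10}$ (with $q_1,q_2$ constant) already used to finish Step~(i).
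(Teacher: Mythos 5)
There is a genuine gap at the heart of part (i), and it propagates to (ii). Your reduction to the normal form, the separability/no-vanishing-subsum checks, and the use of $m_0\neq n+1$ to rule out degenerate subsums are all in the spirit of the paper. But your main quantitative tool — Brownawell--Masser applied to the \emph{four}-term identity $u^d+\bigl(B_v(x+1)^{\ell_0}\bigr)^d-v^d-\bigl(B_ux^{\ell_0}\bigr)^d=0$ — only yields $d\ell_0\le 6\ell_0+3$, i.e.\ $d\le 7$, and you say so yourself. Everything beyond that ("comparing coefficients \dots should eliminate $d=6$ and $d=7$", "a direct comparison of coefficients \dots rules them out by hand") is an unexecuted plan, not an argument; no such computation is exhibited, and it is not at all clear it terminates, since for $d=6$ you would have to kill the infinite family $\ell_0\ge 5$, $n=6\ell_0-5$. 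Moreover part (ii) needs the sharper conclusion $d\le 4$ when $\ell_0=2$ (to exclude $(n,d)=(8,5)$, which survives all your congruence constraints), and your inequality gives only $d\le 7$ there.

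The idea you are missing is the paper's reduction from four terms to three. Dividing the identity by $x^{m_0}$ and substituting $t=1/x$ turns the term $A_1x^{m_0}$ into the \emph{constant} $A_1$, leaving $f_1^d+f_2^d+f_3^d=A_1$ with $f_1=\sqrt[d]{A_2}(1+t)^{\ell_0}$, $f_2=\tilde u(t)$, $f_3=\tilde v(t)$ all of degree $\ell_0$. After proving $f_1^d,f_2^d,f_3^d$ linearly independent (this is where $m_0\neq n+1$ enters), one takes the $3\times 3$ Wronskian: on one hand its $i$-th column is divisible by $f_i^{d-2}$, so $\deg W\ge 3\ell_0(d-2)$; on the other hand, replacing the third column by the constant $A_1$ gives $\deg W\le 2\ell_0 d-3$. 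The resulting inequality $\ell_0(d-6)\le -3$ gives $d\le 5$ at once, and $d\le 4$ when $\ell_0=2$, which is exactly what (ii) requires. Note that running the analogous Wronskian/\emph{abc} count on your four-term sum is strictly weaker (the $4\times 4$ Wronskian only gives divisibility by $\prod f_i^{d-3}$ and a bound of the shape $d\le 12-6/\ell_0$), so the passage to three terms is not a cosmetic choice but the essential step. The case analysis in (iii)--(vi) you outline is fine and matches the paper once (i) and the refined $\ell_0=2$ bound are in hand.
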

 \begin{proof}
Since $\CC_{f,d}$ has 2 packets of points of order $m_0$,
 there exist polynomials $u(x)$ and $v(x)$ of degree $\ell_0$ such that
\beq\label{2 pairs}f(x)=A_1x^{m_0}-u(x)^d=A_2(x+1)^{m_0}-v(x)^d,\eeq
where $A_1$ and $A_2$ are nonzero elements of $K$.
The polynomial $u(x)$ has the form  $u(x)=Bx^{\ell_0}+q(x)$, where $B\neq0$ and the polynomial  $q(x)$ has degree $n-m_0+{\ell}_0$.
In the field of rational functions $K(x)$ we have
\beq\label{2packets gen1}
A_1-\left(\frac{u(x)}{x^{\ell_0}}\right)^d=A_2\left(1+\frac1x\right)^{m_0}-\left(\frac{v(x)}{x^{\ell_0}}\right)^d,
\eeq
\beq\label{2packets gen2}
A_1=A_2(1+t)^{m_0}+\tilde u(t)^d+\tilde v(t)^d,
\eeq
where $\tilde u(t)= t^{\ell_0}u(1/t)$, $\tilde v(t)=\eta t^{\ell_0}v(1/t)$ are degree $\ell_0$ polynomials in $t$ and $\eta\in K$,  $\eta^d=-1$.
Consequently, the equation
\beq\label{poleq1}
X^d+Y^d+Z^d=A_1,
\eeq
where $A_1$ is a nonzero constant, has a solution in polynomials $f_1,f_2,f_3$ of degree ${\ell}_0\geq 2$,
namely, $f_1=\sqrt[d]{A_2}(1+t)^{\ell_0}$, $f_2= \tilde u(t)$,
$f_3=\tilde v(t)$.

First let us prove that the polynomials  $f_1^d,f_2^d,f_3^d$ are linearly independent over $K$. Assume the contrary.
 Let $\alpha f_1^d+\beta f_2^d+\gamma f_3^d=0$, where at least one of the coefficients $\alpha,\beta,\gamma$ is nonzero. If one of the coefficients is zero, then obviously the other two coefficients are nonzero. Assume that $\alpha=0$. Then  $\beta\neq0$, $\gamma\neq0$ and $f_2^d=D f_3^d$, where $D= -\gamma/\beta\neq0$.
Consequently,
  $$\tilde u(t)^d=D\tilde v(t)^d, \quad
t^mu(1/t)^d=-Dt^mv(1/t)^d.$$
Then $u(x)^d=-Dv(x)^d$, and from \eqref{2 pairs} we obtain
$$A_1x^m-D v(x)^d=A_2(x+1)^m-v(x)^d.$$
It follows that
\beq\label{multroots}A_1x^{m_0}-A_2(x+1)^{m_0}=(D-1) v(x)^d.\eeq
If $D=1$ we have a contradiction. If $D\neq 1$, then we again get a contradiction since the left-hand side of \eqref{multroots}
does not have multiple roots.

Now let $\beta=0$. Then $f_1^d=D f_3^d$, where $D=-\gamma/\alpha\neq0$. It follows that
$$A_2(1+t)^{m_0}=-D\tilde v(t)^d.$$
In particular, $-1$ is the only root of $\tilde v(t)$.
However, the roots of the polynomials $v(x)$ and $\tilde v(x)$ are reciprocal  which implies that $-1$
is the only of $v(x)$. But this is impossible since in this case the polynomial
$f(x)=A_2(x+1)^{m_0}-v(x)^d$ would have a multiple root $-1$.

 Now let $\gamma=0$. Then by the same reasoning as in the case $\beta=0$  we obtain that $-1$ is the only root of $\tilde u(t)$ and consequently
 the only root of $u(x)$.
 Thus $u(x)=\sqrt[d]{A_1}(x+1)^{\ell_0}$, which contradicts the fact that
  the  polynomial $u(x)$ has the form  $u(x)=Bx^{\ell_0}+q(x)$, where $B\neq0$ and $q(x)$ has degree $n-m_0+{\ell_0}$.
Indeed, if  $u(x)=\sqrt[d]{A_1}(x+1)^{m_0}$, then $n-m_0+\ell_0=\ell_0-1$ and $m_0=n+1$,  a contradiction.

Thus, all the coefficients $\alpha,\beta,\gamma$ are distinct from zero and
 $$(\sqrt[d]{\alpha} f_1)^d+(\sqrt[d]{\beta}f_2)^d+(\sqrt[d]{\gamma}f_3)^d=0,$$
 which is impossible if $d\geq 3$. Now let us consider the Wronskian
\beq\label{wronskian}W(f_1^d,f_2^d,f_3^d) =\begin{vmatrix}
f_1^d&f_2^d&f_3^d\\
(f_1^d)'&(f_2^d)'&(f_3^d)'\\
(f_1^d)''&(f_2^d)''&(f_3^d)''\\
\end{vmatrix}
\eeq
of the polynomials $f_1^d,f_2^d,f_3^d$, which is nonzero since  the polynomials $f_1^d,f_2^d,f_3^d$ are linearly independent. Since $f_3^d=A_2-f_1^d-f_2^d$, we obtain that
\beq\label{poleq2}\begin{aligned}W(f_1^d,f_2^d,f_3^d)&=W(f_1^d,f_2^d,A)=\begin{vmatrix}
f_1^d&f_2^d&A_2\\
(f_1^d)'&(f_2^d)'&0\\
(f_1^d)''&(f_2^d)''&0\\
\end{vmatrix}\\
&=A_2((f_1^d)'(f_2^d)''-(f_2^d)'(f_1^d)'').
\end{aligned}
\eeq
Now let us estimate the degrees of  $W(f_1^d,f_2^d,f_3^d)$ and $A_2((f_1^d)'(f_2^d)''-(f_2^d)'(f_1^d)'')$.
Since
 $(f_i^d)'=df_i^{d-1}f_i'$ and $$(f_i^d)''=d(d-1)f_i^{d-2}(f_i')^2+df_i^{d-1}f_i''=f_i^{d-2}(d(d-1)(f_i')^2+
 df_{i}f_i''),$$
 we obtain that all elements in the $i$th column in \eqref{wronskian} are divisible by $f_i^{d-2}$ and so
 \beq\label{wronskian1}W(f_1^d,f_2^d,f_3^d) =f_1^{d-2}f_2^{d-2}f_3^{d-2}P,
 \eeq
 where $ P$ is a nonzero polynomial. If $f_1^d,f_2^d,f_3^d$ are linearly independent over $K$, then $ W(f_1^d,f_2^d,f_3^d)$ is a nonzero polynomial, and so $P$ is a nonzero polynomial.  Therefore, \beq \label{degwronskian}\deg W(f_1^d,f_2^d,f_3^d)\geq \deg f_1^{d-2}f_2^{d-2}f_3^{d-2}=3\ell_0(d-2).\eeq
On the other hand,  the degree of $A_2((f_1^d)'(f_2^d)''-(f_2^d)'(f_1^d)'')$ does not exceed
$(\ell_0 d-1)+(\ell_0 d-2)=2l_0d-3$, and so $\deg W(f_1^d,f_2^d,f_3^d)\leq 2\ell_0 d-3$.
 It follows that
 $3\ell_0(d-2)\leq 2\ell_0 d-3$, i.e., $\ell_0 d-6\ell_0 \le -3$, i.e.,
 \begin{equation}
 \label{ellD6}
 \ell_0 (d-6) \le -3.
 \end{equation}
 This implies that $d<6$, i.e., $d\leq 5$, which proves (i). In addition, it follows from \eqref{ellD6} that if
 $\ell_0=2$ (i.e., $n<2d$), then $d \ne 5$, i.e., $2 \le d \le 4$.
 In this case
 $$n<m_0=\ell_0 d= 2d \le 2 \cdot 4=8;$$
 in particular, $n<8$, i.e., $n \le 7$. This proves (ii).

 In order to prove (iii), first let us assume that $n=7$. Since $8=n+1$ is {\sl not} divisible by $d$,
 we conclude that $d \ne 2,4$.

 Suppose that $d \ne 3$.
 Then $d=5$. It follows that
 $$m_0=10, \ell_0=2,  \ n-m_0+\ell_0=7-10+2=-1,$$
 which is not possible, because if $n-m_0+\ell_0<0$, then there are no points of order $m_0$.
 The obtained contradiction implies that $d=3$, which finishes the proof  of (iii).

  If $n=5$, then $d$ is neither $5$ nor a divisor of $n+1=6$. Hence, $d \ne 2,3$. This implies
 that $d=4$ and therefore
 $$m_0=8, \ell_0=2, \  n-m_0+\ell_0=5-8+2=-1<0.$$
  Then there are no points of order $m_0$.
 The obtained contradiction implies that $n \ne 5$, which proves first part of (iv).

 If $n=6$, then $(6,d)=1$, i.e., $d \ne 2,3,4$. Hence $d=5$.
 This implies that
 $$m_0=10, \ell_0=2, \ n-m_0+\ell_0=6-10+2=-2<0.$$
 Then there are no points of order $m_0$.
 The obtained contradiction implies that $n \ne 6$, which ends the proof of (iv).

 If $n=4$, then $d$ is not a divisor of $4$; in addition, $d<4$. The only remaining possibility is
 $d=3$, which proves (v).

 If $n=9$, then $d \ne 3$ and $(d,n+1)=(d,10)=1$; the latter means that $d \ne 2,5$.
 It follows that $d=4$, which proves (vi).
 \end{proof}
\section {Hyperelliptic curves}
\label{d=2}
We keep the notation and assumptions of the previous section.

\subsection*{The case $d=2$.}
Since $(n,2)=(n,d)=1$,  the degree $n$ of $f(x)$ is odd. Due to $n<m_0<n+2$ we have $m_0=n+1$.
If $\CC_{f,d}$ has two packets of points of order $m_0$ with abscissas, say, $0$ and $-1$, then $f(x)$ can be represented in the form
\beq\label{2pairs-1}f(x)=A_1x^{n+1}-u(x)^2=A_2(x+1)^{n+1}-v(x)^2,\eeq  where $u(x),v(x)$ are polynomials of degree $\ell_0=(n+1)/2$.
Let  $B_1x^{\ell_0}$ and $B_2x^{\ell_0}$, where $B_1\neq0$ and $B_2\neq0$ be the leading terms of $u(x)$ and $v(x)$ respectively.
Then $B_1^2=A_1$
 and $B_2^2=A_2$. We have
\beq\label{2pairs0}A_1(x^{n+1}-\tilde u(x)^2)=A_2((x+1)^{n+1}-\tilde v(x)^2),\eeq
where $$\tilde u(x)=\frac{1}{B_1}u(x),\; \tilde v(x)=\frac{1}{B_2}v(x).$$ Let $C\in K$ be such that
$ C^{n+1}=A_1/A_2$ if $A_1\neq A_2$ and $C=1$ if $A_1=A_2$. Then
\beq\label{2pairs-2}C^{n+1}(x^{n+1}-\tilde u(x)^2)=(x+1)^{n+1}-\tilde v(x)^2,\eeq
  $$(Cx)^{n+1}-(C^l\tilde u(x))^2=(x+1)^{n+1}-\tilde v(x)^2,$$
\beq\label{2pairs1}(x+1)^{n+1}-(Cx)^{n+1}=\tilde v(x)^2-(C^{\ell_0}\tilde u(x))^2.\eeq
 Let $\mu_{n+1}$ be the set of all $(n+1)$-th roots of $1$. Then
 \beq\label{2pairs2}(x+1)^{n+1}-(Cx)^{n+1}=\prod\limits_{\eps\in \mu_{n+1}}(x+1-C\eps x)=
 \prod\limits_{\eps\in\mu_{n+1}}((1-C\eps)x+1).
 \eeq
Hence
\beq\label{2pairs3}
 \prod\limits_{\eps\in\mu_{n+1}}((1-C\eps)x+1)=(\tilde v(x)+C^l\tilde u(x))(\tilde v(x)-(C^l\tilde u(x)).
 \eeq
 Consider the  cases $A_1\neq A_2$ and $A_1=A_2$ separately.

 a) Let $A_1\neq A_2$. Since the degree of each factor on the right-hand side of \eqref{2pairs3} is at most $\ell_0$ and the degree of the polynomial on the left-hand side of \eqref{2pairs3} is $2\ell_0$, we have
 $\deg(\tilde v(x)+C^{\ell_0}\tilde u(x))=\ell_0$ and  $\deg(\tilde v(x)-C^{\ell_0}\tilde u(x))=\ell_0$.
%Let $M(n,C)$ be the set of roots of the polynomial $(x+1)^{n+1}-(Cx)^{n+1}$, i.e.,
%$M(n,C)=\{(C\eps -1)^{-1}\mid \eps\in\mu_{n+1}\}$.
For any $\ell_0$-element subset $I$ of $\mu_{n+1}$ we put
\beq\label{H_I}H_I= \prod\limits_{\eps\in I}((1-C\eps)x+1).\eeq
 Since the polynomial on the left-hand side of \eqref{2pairs3}  has no repeated roots and the factors on the right-hand side of \eqref{2pairs3} have degree $\ell_0$, they are relatively prime. It follows that
 \beq\label{2pairs4}
\tilde v(x)+C^{\ell_0}\tilde u(x)=\lambda H_I \;\text{and}\;\tilde v(x)-C^{\ell_0}\tilde u(x)=\frac1{\lambda}H_{\complement I}
 \eeq
 for some $\ell_0$-element subset $I$ of $\mu_{n+1}$ and $\lambda\in K$, $\lambda\neq0$. We have
 \beq\label{2pairs5}
\tilde v(x)=\frac{\lambda H_I+(1/{\lambda})H_{\complement I}}2, \;\tilde u(x)=\frac{\lambda H_I-(1/{\lambda})H_{\complement I}}{2C^{\ell_0}}.\eeq It will follow from Theorem \ref{finite} that the curve
 $\CC_{f,2}$, where the polynomial $f(x)$ is defined by \eqref{2pairs-1} has at least 2 packets of torsion points of order $m_0$.

 b) Let $A_1=A_2$. Then the equation \eqref{2pairs-2} takes the form
 \beq\label{2pairs-3} x^{n+1}-\tilde u(x)^2 =(x+1)^{n+1}-\tilde v(x)^2,\eeq
 hence
 \beq\label{2pairs-4}(x+1)^{n+1}-x^{n+1}=\tilde v(x)^2-\tilde u(x)^2.\eeq
 The left-hand side of \eqref{2pairs-4} is a polynomial of degree $n$  with leading coefficient $n+1$.
The right-hand side of \eqref{2pairs-4} is the product
\beq\label{2pairs{-5}} (\tilde v(x)-\tilde u(x)(\tilde v(x)+\tilde u(x)).\eeq
The degree of each product is at most $\ell_0$ and cannot be both less that $\ell_0$ or equal to $\ell_0$ simultaneously.
Thus one of them has degree $\ell_0=(n+1)/2$ and the other $\ell_0-1=(n-1)/2$.
We have
\beq\label{C=1_1}
(x+1)^{n+1}-x^{n+1}= (\tilde v(x)-\tilde u(x)(\tilde v(x)+\tilde u(x)).
\eeq
The polynomial $(x+1)^{n+1}-x^{n+1}$ has $n$ distinct roots $(\eps-1)^{-1}$, where
$\eps\in\mu_{n+1}$.
For any $\ell_0$-element subset $I$ of $\mu_{n+1}$ we put
\beq\label{C=1_2}H_I= \prod\limits_{\eps\in I}((1-\eps)x+1) \quad \text{and} \quad H_{\complement I}= \prod\limits_{\eps\in \complement I}((1-\eps)x+1).\eeq
We have \beq\label{C1_3} H_IH_{\complement I}=(\tilde v(x)-\tilde u(x))(\tilde v(x)+\tilde u(x)).\eeq
There exist an $\ell_0$-element subset of $\mu_{n+1}$ and a nonzero $\lambda\in K$ such that
\beq\label{C=1_3} \tilde v(x)-\tilde u(x)=\lambda H_I, \; \tilde v(x)+\tilde u(x)=\frac1{\lambda}H_{\complement I}\eeq
or
\beq\label{C=1_4}\tilde v(x)-\tilde u(x)=\frac1{\lambda}H_{\complement I}, \; \tilde v(x)+\tilde u(x)=\lambda H_{I}\eeq
From \eqref{C=1_3} we get
\beq\label{C=1_5}\tilde v(x)=\frac{\lambda H_I+(1/\lambda)H_{\complement I}}2,
\;\tilde u(x)=\frac{(1/\lambda)H_{\complement I}-\lambda H_I}2.\eeq
From \eqref{C=1_4} we get
\beq\label{C=1_6}\tilde v(x)=\frac{\lambda H_I+(1/\lambda)H_{\complement I}}2,
\;\tilde u(x)=\frac{\lambda H_I-(1/\lambda)H_{\complement I}}2.
\eeq
Let \eqref{C=1_5} be the case (the case \eqref{C=1_6} is considered similarly).
Then  for any $\ell_0$-element $I\subset \mu_{n+1}$ and $\lambda\in K$, $\lambda\neq0$, the polynomials $\tilde v(x)$ and $\tilde u(x)$ defined by
 \eqref{C=1_5} satisfy equation \eqref{C=1_1}. It will follow from Theorem \ref{finite} that the curve
 $\CC_{f,2}$, where the polynomial $f(x)$ is defined by \eqref{2pairs-1} has at least 2 packets of torsion points of order $m_0$.

\begin{thm}\label{finite}
For each $\ell_0$-element subset $I\subset \mu_{n+1}$ there are only finitely many values of   $\lambda\in K^{*}$ such that
the polynomial
$$x^{n+1}-\tilde u(x)^2, \; \text{where}\; \tilde u(x)=\frac{\lambda H_I-(1/{\lambda})H_{\complement I}}{2C^{\ell_0}}$$
%f(x)=A_1x^{n+1}-u(x)^2$
has  a multiple root.
\end{thm}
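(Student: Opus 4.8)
The plan is to show that, apart from finitely many $\lambda\in K^{*}$, the polynomial
$$g_\lambda(x):=x^{n+1}-\tilde u(x)^2,\qquad \tilde u(x)=\frac{\lambda H_I-(1/\lambda)H_{\complement I}}{2C^{\ell_0}},$$
is separable, and to detect separability through a discriminant that turns out to be a nonzero polynomial in $\lambda$. Since $d=2$ we have $(n,2)=1$, so $n$ is odd and $n+1=2\ell_0$; hence
$$g_\lambda(x)=\bigl(x^{\ell_0}-\tilde u(x)\bigr)\bigl(x^{\ell_0}+\tilde u(x)\bigr).$$
By the multiplicativity formula $\mathrm{Disc}(PQ)=\mathrm{Disc}(P)\,\mathrm{Disc}(Q)\,\mathrm{Res}(P,Q)^2$ (valid since $\fchar K\ne 2$), $g_\lambda$ is separable as soon as each of the two factors is separable and the two factors have no common root. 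A common root $x_0$ would satisfy $x_0^{\ell_0}=0=\tilde u(x_0)$, hence $x_0=0$ and $\tilde u(0)=0$; and since $H_I(0)=H_{\complement I}(0)=1$ one gets $\tilde u(0)=(\lambda^2-1)/(2C^{\ell_0}\lambda)$, which vanishes only for $\lambda=\pm 1$. So the ``common root'' contribution is already finite, and it remains to bound the $\lambda$ for which one of the two factors fails to be separable.

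Here I would use the symmetry $I\leftrightarrow\complement I$. Clearing denominators, $x^{\ell_0}-\tilde u(x)$ has the same roots, with the same multiplicities, as
$$\Phi_I(\lambda,x):=2C^{\ell_0}\lambda\,x^{\ell_0}-\lambda^2 H_I(x)+H_{\complement I}(x)\in K[\lambda][x],$$
and the substitution $\lambda\mapsto 1/\lambda$, a bijection of $K^{*}$, carries $x^{\ell_0}+\tilde u(x)$ (after clearing denominators and rescaling) to $\Phi_{\complement I}(\lambda,x)$. Since $|I|=|\complement I|=\ell_0$ (as $n+1=2\ell_0$), it therefore suffices to prove: for every $\ell_0$-element subset $I\subset\mu_{n+1}$, the polynomial $\Phi_I(\lambda,x)$ has a multiple root in $x$ for only finitely many $\lambda\in K^{*}$. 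I would do this by regarding $\Phi_I$ as a polynomial in $x$ over the rational function field $K(\lambda)$, forming its discriminant $\Delta_I:=\mathrm{Disc}_x\Phi_I\in K[\lambda]$, and showing $\Delta_I\not\equiv 0$; then $\Delta_I$ has only finitely many zeros, and adding the finitely many $\lambda$ at which the leading coefficient of $\Phi_I$ in $x$ drops (so that $\Delta_I$ might fail to specialize to the honest discriminant) and the values $\lambda=\pm 1$ finishes the proof. To see $\Delta_I\not\equiv 0$ I would specialize $\lambda$: replacing $\lambda$ by $1/\mu$ and clearing denominators sends $\Phi_I$ to a polynomial whose value at $\mu=0$ is $-H_I(x)$, while $\Phi_I(0,x)=H_{\complement I}(x)$. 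Each of $H_I$ and $H_{\complement I}$ is, up to a nonzero constant, a product of pairwise non-proportional linear forms — the factors $(1-C\eps)x+1$ for distinct $\eps$ all have constant term $1$ and pairwise distinct leading coefficients $1-C\eps$ — hence separable of positive degree; so its discriminant is nonzero, and specializing $\Delta_I$ there gives a nonzero value, provided the degree in $x$ has not dropped at the specialization.

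The genuinely delicate point — the main obstacle — is exactly this degree bookkeeping. At least one of $H_I$, $H_{\complement I}$ has degree exactly $\ell_0$ (both do unless $C=1$, i.e.\ unless $A_1=A_2$, in which case one of them has degree $\ell_0-1$ because of the root of unity $\eps=1$), and one must choose the limit $\lambda\to\infty$ or $\lambda\to 0$ so that the resulting polynomial ($-H_I$ or $H_{\complement I}$ respectively) still has the generic degree $\ell_0$ of $\Phi_I$ in $x$; with that choice the leading coefficient of $\Phi_I$ in $x$ does not vanish at the chosen point, so $\mathrm{Disc}_x$ specializes correctly and is seen to be a nonzero element of $K[\lambda]$. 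Everything else — the explicit discriminant and resultant manipulations, the passage from $\tilde u$ to $\Phi_I$, and the count of excluded $\lambda$ — is routine.
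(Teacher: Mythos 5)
Your proposal is correct, and its decisive step is genuinely different from the paper's. Both arguments open the same way: factor $x^{n+1}-\tilde u^2=(x^{\ell_0}-\tilde u)(x^{\ell_0}+\tilde u)$, note that a common root of the two factors forces $x_0=0$ and then $\lambda=\pm1$ (since $H_I(0)=H_{\complement I}(0)=1$), and reduce to showing each factor is separable for all but finitely many $\lambda$ (for the second factor the paper just replaces $\lambda$ by $-\lambda$, which is a bit more economical than your $\lambda\mapsto 1/\lambda$, $I\leftrightarrow\complement I$ symmetry, but both are valid). From there the paper avoids the discriminant: it writes the two conditions ``value and $x$-derivative vanish at $x_0$,'' eliminates $\lambda$ by hand --- solving for $\lambda^2=\bigl(\ell_0 H_{\complement I}(x_0)-x_0H_{\complement I}'(x_0)\bigr)/\bigl(\ell_0 H_I(x_0)-x_0H_I'(x_0)\bigr)$ after a lemma that the denominator is a nonzero polynomial, then substituting back to produce a fixed polynomial in $x$, independent of $\lambda$, that every multiple root must satisfy --- and proves that polynomial nonzero by comparing the exact power of $x$ dividing the two sides (one side has $x$-adic valuation exactly $n=2\ell_0-1$, which is odd, while the other is $x$ times a square). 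Your route --- regard $\mathrm{Disc}_x\Phi_I$ as an element of $K[\lambda]$ and prove it nonzero by specializing $\lambda\to 0$ or $\lambda\to\infty$, where $\Phi_I$ degenerates to $H_{\complement I}$ or $-H_I$, visibly separable products of pairwise non-proportional linear forms --- replaces that computation by a short limit argument, and you correctly isolate the one delicate point: choosing the specialization at which the leading coefficient $\prod_{\eps}(1-C\eps)$ survives (an issue only when $C\in\mu_{n+1}$, i.e.\ the case $A_1=A_2$, $C=1$, where exactly one of $H_I,H_{\complement I}$ drops to degree $\ell_0-1$). What the paper's longer computation buys is explicit control --- a concrete polynomial carrying all candidate multiple roots and the bound of at most two values of $\lambda$ per candidate --- whereas your argument is shorter and yields only finiteness, which is all the theorem asserts. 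One cosmetic remark: the formula $\mathrm{Disc}(PQ)=\mathrm{Disc}(P)\,\mathrm{Disc}(Q)\,\mathrm{Res}(P,Q)^2$ needs no hypothesis on the characteristic, and you do not actually need it; the observation that a multiple root of $PQ$ is a multiple root of $P$, a multiple root of $Q$, or a common root already suffices.
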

\begin{proof}
1)
We have
%$$f(x)=A_1x^{n+1}-u(x)^2=A_1(x^{n+1}-\tilde u(x)^2),$$
%where
%$$\tilde u(x)=\frac{\lambda H_I-(1/{\lambda})H_{\complement I}}{2C^{\ell_0}}.$$
%Hence
$$x^{n+1}-\frac{(\lambda H_I-(1/{\lambda})H_{\complement I})^2}{4C^{n+1}}=\frac{(Cx)^{n+1}-(\lambda H_I-(1/{\lambda})H_{\complement I})^2}{4C^{n+1}}.$$
%We prove that there are  only finitely many values of   $\lambda\in K^{*}$ such that the polynomial $(Cx)^{n+1}-(\lambda %H_I-(1/{\lambda})H_{\complement I})^2$ has a multiple root.
Hence
%$$\begin{aligned}(Cx)^{n+1}-(\lambda H_I-(1/{\lambda})H_{\complement I})^2\\=((Cx)^{l}-(\lambda H_I-(1/{\lambda})H_{\complement %I})
%( (Cx)^{l}+ (\lambda H_I-(1/{\lambda})H_{\complement I}) )\end{aligned}$$
\beq\label{mult4}\begin{aligned}(Cx)^{n+1}-(\lambda H_I-(1/{\lambda})H_{\complement I})^2=((Cx)^{\ell_0}-\vf(x))((Cx)^{\ell_0}+\vf(x)),\end{aligned}\eeq
where
$$\vf(x)=\lambda H_I(x)-(1/{\lambda})H_{\complement I}(x).$$
Assume that the polynomials $(Cx)^{\ell_0}-\vf(x)$ and $(Cx)^{\ell_0}+\vf(x)$ have a common root $x_0$.
Then  $(Cx_0)^{\ell_0}=0$ and $\vf(x_0)=0$. Hence $x_0=0$ and
$\lambda H_I(0)-(1/{\lambda})H_{\complement I}(0)=0$. Since $ H_I(0)=H_{\complement I}(0)=1$, we
obtain that
$0=\vf(0)=\lambda-1/{\lambda}$, and so $\lambda=\pm1$. So for all $\lambda\neq\pm1$ the polynomials $(Cx)^{\ell_0}-\vf(x)$ and $(Cx)^{\ell_0}+\vf(x)$ do not have common roots.
To prove the theorem it suffices to prove that for the polynomial $(Cx)^{\ell_0}-\vf(x)$ there are only finitely many values of $\lambda$ for which it has  multiple roots (since the other factor on the right-hand side of \eqref{mult4} is obtained by replacing $\lambda$ by $-\lambda$).

Assume that $x_0$ is a multiple root of
\beq\label{mult5}(Cx)^{\ell_0}-\vf(x)= C^{\ell_0}x^{\ell_0}-(\lambda H_I(x)-(1/{\lambda})H_{\complement I}(x)).\eeq Then
\beq\label{mult1}
C^{\ell_0}x_0 ^{\ell_0}-(\lambda H_I(x_0)-(1/{\lambda})H_{\complement I}(x_0))=0,\eeq
\beq\label{mult1bis}\ell_0 C^{\ell_0}x_0^{\ell_0-1}-(\lambda H_I'(x_0)-(1/{\lambda})H_{\complement I}'(x_0))=0.
 \eeq
If
$$\lambda H_I(x_0)-(1/{\lambda})H_{\complement I}(x_0))=0,$$ then it follows from the first equation of \eqref{mult1}
that $x_0=0$ and
\beq\label{lamex}
\lambda^2=\frac{H_{\complement I}(0)}{H_I(0)} .\eeq
In what follows we assume that  $\lambda\neq\pm\sqrt{H_{\complement I}(0)/H_I(0)}$.
Multiplying   equation \eqref{mult1} by $\ell_0$ and equation \eqref{mult1bis} by $x_0$ we get
\beq\label{mult2}
\begin{aligned}
\ell_0 C^{\ell_0}x_0 ^{\ell_0}-\ell_0(\lambda H_I(x_0)-(1/{\lambda})H_{\complement I}(x_0))=0,\\
\ell_0 C^{\ell_0}x_0^{\ell_0}-x_0(\lambda H_I'(x_0)-(1/{\lambda})H_{\complement I}'(x_0))=0.
\end{aligned}
\eeq
Consequently,
\beq\label{mult3}
\begin{aligned}
\ell_0(\lambda H_I(x_0)-(1/{\lambda})H_{\complement I}(x_0))=x_0(\lambda H_I'(x_0)-(1/{\lambda})H_{\complement I}'(x_0)),\\
\ell_0 (\lambda^2 H_I(x_0)- H_{\complement I}(x_0))=x_0(\lambda^2 H_I'(x_0)- H_{\complement I}'(x_0)),\\
\lambda^2(\ell_0 H_I(x_0)-x_0 H_I'(x_0))=\ell_0 H_{\complement I}(x_0)-x_0 H_{\complement I}'(x_0).
\end{aligned}
\eeq
\begin{lem}\label{lem1}
The polynomial $\ell_0 H_I(x)-xH'(x)$  is nonzero.\end{lem}
\begin{proof}
Let
\beq\label{nonzero1}
H_I(x)=a_0x^{\ell_0}+a_1x^{\ell_0-1}+a_2x^{\ell_0-2}+\cdots+2a_{\ell_0-2}x^2+a_{\ell_0-1}x+a_{\ell_0}.\eeq
Then
\beq\label{nonzero2}
H_I'(x)=\ell_0 a_0x^{\ell_0}+\ell_0 a_1x^{\ell_0-1}+\ell_0 a_2x^{\ell_0-2}+\cdots+2\ell_0 a_{\ell_0-2}x^2+\ell_0 a_{\ell_0-1}x+\ell_0 a_{\ell_0}.\eeq
Consequently,
\beq\label{nonzero3}\begin{aligned}
&\ell_0 H_I(x)-xH_I'(x)\\=&a_1x^{\ell_0-1}+2a_2x^{\ell_0-2}+\cdots+(\ell_0-2)a_{\ell_0-2}x^2+(\ell_0-1)a_{\ell_0-1}x +\ell_0 a_{\ell_0}. \end{aligned}\eeq
If $lH_I(x)-xH'(x)$ is a zero polynomial, then
$a_1=a_2=\cdots=a_{\ell_0}=0$ and  $H_I(x)=a_0x^{\ell_0}$, a contradiction.
\end{proof}
By Lemma\label{lem1} $\ell_0 H_I(x)-xH_I'(x)$ is a nonzero polynomial, which implies that the number of its roots is finite and so may and will assume that $x_0$ is not one of them, i.e., $$\ell_0 H_I(x_0)-x_0H_I'(x_0)\neq0. $$Then
\beq\label{mult7}\lambda^2=\frac{\ell_0 H_{\complement I}(x_0)-x_0 H_{\complement I}'(x_0)}{\ell_0 H_I(x_0)-x_0 H_I'(x_0)}.\eeq
From the first equation of \eqref{mult2} we get
\beq\label{mult6}\begin{aligned}\lambda\ell_0 C^{\ell_0}x_0 ^{\ell_0}=\ell_0(\lambda^2 H_I(x_0)-H_{\complement I}(x_0)),\\
\ell_0 H_I(x_0)\lambda^2-\ell_0 C^{\ell_0}x_0 ^{\ell_0}\lambda-\ell_0 H_{\complement I}(x_0)=0.
\end{aligned}\eeq
 Since $\lambda\neq\pm\sqrt{H_{\complement I}(0)/H_I(0)}$, it follows  that at least one of the coefficients
 of the polynomial on the left-hand side of the second equation in \eqref{mult6} is nonzero, and so
 for each $x_0$ there exist at most 2 values of $\lambda$ satisfying \eqref{mult6}.
We prove the theorem if we prove that it follows from the above conditions that $x_0$ is a root of a nonzero polynomial
whose coefficients do not depend on $\lambda$.
Squaring both sides of equation \eqref{mult6} and applying \eqref{mult7} we obtain
\beq\label{mult8}
\begin{aligned}
&\frac{\ell_0 H_{\complement I}(x_0)-x_0 H_{\complement I}'(x_0)}{\ell_0 H_I(x_0)-x_0 H_I'(x_0)}\ell_0^2C^{n+1}x_0^{n+1}\\
=&\ell_0^2\left(\frac{ \ell_0 H_{\complement I}(x_0)-x_0 H_{\complement I}'(x_0)}{\ell_0 H_I(x_0)-x_0 H_I'(x_0)}H_I(x_0)-H_{\complement I}(x_0)\right)^2,\\
&(\ell_0 H_{\complement I}(x_0)-x_0 H_{\complement I}'(x_0))(\ell_0 H_I(x_0)-x_0 H_I'(x_0))\ell_0^2C^{n+1}x_0^{n+1}\\
=&\ell_0^2( \ell_0 H_{\complement I}(x_0)-x_0 H_{\complement I}'(x_0)H_I(x_0)-(\ell_0 H_I(x_0)-x_0 H_I'(x_0))H_{\complement I}(x_0))^2\\
=&\ell_0^2x_0(H_I'(x_0)H_{\complement I}(x_0)-H_{\complement I}'(x_0)H_I(x_0))^2.
\end{aligned}
\eeq
It follows that $x_0$ is a root of the polynomial
\beq\label{mult9}
\begin{aligned}
(\ell_0 H_{\complement I}(x)-x H_{\complement I}'(x))(\ell_0 H_I(x)-x H_I'(x))\ell_0^2C^{n+1}x^{n+1}\\
-\ell_0^2x(H_I'(x)H_{\complement I}(x)-H_{\complement I}'(x)H_I(x))^2
\end{aligned}
\eeq
and since $x_0\neq0$ it is also a root of
\beq\label{mult10}
\begin{aligned}
(\ell_0 H_{\complement I}(x)-x H_{\complement I}'(x))(\ell_0 H_I(x)-x H_I'(x))\ell_0^2C^{n+1}x^{n}\\
-\ell_0^2(H_I'(x)H_{\complement I}(x)-H_{\complement I}'(x)H_I(x))^2
\end{aligned}
\eeq
There will be finitely many possibilities for $x_0$ if we prove that  polynomial \eqref{mult10} is nonzero.
Assume the contrary. Then we have the equality of polynomials
\beq\label{mult11}\begin{aligned}
(\ell_0 H_{\complement I}(x)-x H_{\complement I}'(x))(\ell_0 H_I(x)-x H_I'(x))\ell_0^2C^{n+1}x^{n}\\=
\ell_0^2(H_I'(x)H_{\complement I}(x)-H_{\complement I}'(x)H_I(x))^2.\end{aligned}\eeq
The polynomial on the left-hand side of  \eqref{mult11} is divisible by $x^n$ and not divisible by $x^{n+1}$ since
$$\ell_0 H_{\complement I}(0)-0\cdot H_{\complement I}'(0)=\ell_0\neq0$$
and
$$\ell_0 H_I(0)-0\cdot H_I'(0)=\ell_0\neq0.$$
However, the largest power of $x$ that divides $$(H_I'(x)H_{\complement I}(x)-H_{\complement I}'(x)H_I(x))^2$$
is even, but $n=2\ell_0-1$, a contradiction.
\vskip.3cm
\end{proof}

\end{document}